\newcommand{\la}{\langle}
\newcommand{\ra}{\rangle}
\newcommand{\leqs}{\leqslant}
\newcommand{\geqs}{\geqslant}
\newcommand{\Aut}{\operatorname{Aut}}
\newcommand{\PSU}{\mathrm {U}}
\newcommand{\PSL}{\mathrm {L}}
\newcommand{\PSp}{\mathrm {S}}
\newcommand{\PSO}{\mathrm {O}}
\newcommand{\OB}{\mathbf{O}}
\newcommand{\CB}{\mathbf{C}}
\newcommand{\FB}{\mathbf{F}}
\newcommand{\NB}{\mathbf{N}}
\newcommand{\ZB}{\mathbf{Z}}
\newcommand{\RB}{\mathbf{R}}
\newcommand{\EB}{\mathbf{E}}
\newcommand{\GL}{\mathrm {GL}}
\newcommand{\Irr}{\mathrm {Irr}}
\newcommand{\Alt}{\mathrm {Alt}}
\newcommand{\Sym}{\mathrm {Sym}}
\newcommand{\SSS}{\mathrm {S}}
\newcommand{\AAA}{\mathrm {A}}
\newcommand{\supp}{\mathrm {supp}}
\newcommand{\imod}[1]{\allowbreak\mkern4mu({\operator@font mod}\,\,#1)}
\renewcommand{\leq}{\leqs}
\renewcommand{\geq}{\geqs}
\newtheorem{thm}{Theorem}[section] 
\newtheorem{lem}[thm]{Lemma}
\theoremstyle{definition}
\newtheorem{rem}[thm]{Remark}
\begin{document}
\title[Orders of commutators]{Orders of commutators and Products of conjugacy classes in finite groups}

\author{Hung P. Tong-Viet}
\address{H.P. Tong-Viet, Department of Mathematics and Statistics, Binghamton University, Binghamton, NY 13902-6000, USA}
\email{htongvie@binghamton.edu}

\begin{abstract}
Let $G$ be a finite group, let $x \in G$, and let $p$ be a prime. We prove that the commutator $[x,g]$ is a $p$-element for every $g \in G$ if and only if $x$ is central modulo $\mathbf{O}_p(G)$, where $\mathbf{O}_p(G)$ denotes the largest normal $p$-subgroup of $G$. This result provides a common generalization of certain variants of both the Baer--Suzuki theorem and Glauberman's $\mathbf{Z}_p^*$-theorem. As an application, we show that if $K$ is a conjugacy class of $G$ such that $K^{-1}K = 1 \cup D \cup D^{-1}$ for some conjugacy class $D$ of $G$, then the subgroup generated by $K$ is solvable.
\end{abstract}

\date{\today}

\maketitle

\section{Introduction}\label{s:intro}

Let $G$ be a finite group and let $x$ be an element of $G$. A central question in group theory is whether $x$ lies in a small characteristic subgroup of $G$. This problem has been the subject of extensive research, and numerous papers have addressed various aspects of it. Below, we highlight two of the most well-known results related to this topic.
Recall that for a prime $p$, the element $x$ is called \emph{$p$-singular} if its order is divisible by $p$; it is called \emph{$p$-regular} (or a $p'$-element) if its order is not divisible by $p$; and it is called a \emph{$p$-element} if its order is a power of $p.$ 

The Baer-Suzuki theorem provides a useful characterization of the Fitting subgroup $\FB(G)$ of a finite group $G$, which is the largest nilpotent normal subgroup of $G$. It states that an element $x\in G$ lies in $\FB(G)$ if and only if $\la x,x^g\ra$ is nilpotent for every $g\in G.$ Various versions of this result  have been established by Baer, Suzuki, Alperin and Lyons (see \cite{Baer,Suzuki,AL}).
When $x$ is a $p$-element for some prime $p$, the theorem can be restated as follows: $x\in \OB_p(G)$ if and only if $\la x,x^g\ra$ is a $p$-group for every $g\in G$, where $\OB_p(G)$ is the largest normal $p$-subgroup of $G$.  This result was further strengthened by Guralnick and Robinson \cite[Theorem A]{GR}, who showed that if $x$ is an element of prime order $p$ and the commutator $[x,g]=x^{-1}g^{-1}xg$ is a $p$-element for every $g\in G$, then $x\in \OB_p(G)$. Later, Guralnick and Malle extended this result to  all $p$-elements  \cite[Theorem 1.4]{GM1}.  The various proofs of the Baer–Suzuki theorem mentioned above do not rely on the classification of finite simple groups, whereas the variations established in \cite{GR,GM1} do depend on it. For further generalizations of the Baer–Suzuki theorem, see \cite{GTT,Wielandt}.

In the opposite direction, Glauberman's $\ZB_p^*$-theorem provides a criterion for determining when a $p$-element of a finite group lies in the center of the group modulo a certain normal subgroup. Recall that the element $x\in G$ is said to be central modulo a normal subgroup $N$ of $G$ if the image $xN$ lies in the center of the quotient group $G/N$. Equivalently, this means that  the subgroup $[x,G]$, generated by all the commutators $[x,g] $ for $g\in G$, is contained in $N$. For a prime  $p$, we define $\ZB_p^*(G)$ to be the full inverse image in $G$ of the center of $G/\OB_{p'}(G)$, where $\OB_{p'}(G)$ is the largest normal $p'$-subgroup of $G$. Glauberman's $\ZB_p^*$-theorem states that if $x$ is a $p$-element of a finite group $G$, then $[x,g]$ is a $p'$-element for every $g\in G$ if and only if $x$ is central modulo $\OB_{p'}(G)$.  Equivalently, it can be rephrased as follows:  $x^G\cap \CB_G(x)=\{x\}$ if and only if $x\in \ZB_p^*(G)$, where $x^G$ denotes the conjugacy class of $G$ containing $x$ and $\CB_G(x)$ is the centralizer of $x$ in $G$. (See \cite[Theorem 5.1]{GTT}.)

Glauberman \cite{Glauberman} originally proved his theorem for $p=2$  using the modular representation theory, so it does not depend on the classification of finite simple groups. For odd primes, the result was independently established by Guralnick and Robinson  \cite{GR} and by Artemovich \cite{Ar}; both proofs use the classification of finite simple groups.  We refer the readers to \cite{GTT} for various equivalent formulations  of this theorem. As an application, Guralnick and Robinson proved the following generalization (\cite[Theorem D(ii)]{GR}): if $x\in G$ is an element of prime order $r$ and $p\neq r$ is a prime such that $[x,g]$ is a $p$-element for every $g\in G,$ then $x$ is central modulo $\OB_p(G)$.
We now present our first result, which generalizes several of the  theorems discussed above.

\begin{thm}\label{th:r-elements-commutators}
Let $G$ be a finite group, let $x\in G$ and let $p$ be a prime. Then $[x,g]$ is a $p$-element for every $g\in G$ if and only if $x$ is central modulo $\OB_p(G)$.
\end{thm}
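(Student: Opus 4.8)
The plan is to prove the nontrivial (``only if'') direction, since if $x$ is central modulo $O_p(G)$ then $[x,G]\le O_p(G)$ and every $[x,g]$ lies in the $p$-group $O_p(G)$. I would argue by taking $G$ to be a counterexample of minimal order. First I would reduce to the situation $O_p(G)=1$: passing to $G/O_p(G)$ preserves the hypothesis (images of $p$-elements are $p$-elements) and turns the goal into $x\in Z(G)$; by minimality the quotient is not a counterexample, forcing $[x,G]\le O_p(G)$, a contradiction. Next I would show $G=\langle x^G\rangle$: if the normal closure $N=\langle x^G\rangle$ were proper, then since $O_p(N)\le O_p(G)=1$ the theorem would hold in $N$, giving $x\in Z(N)$; as $N\trianglelefteq G$ this makes every generator $x^g$ of $N$ central in $N$, so $N$ is abelian with no nontrivial $p$-element, and since $[x,g]=x^{-1}x^g\in N$ is a $p$-element for every $g\in G$ we would get $[x,G]=1$, again a contradiction. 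So I may assume $O_p(G)=1$ and $G=\langle x^G\rangle$, and it suffices to prove $x\in Z(G)$.

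With these reductions in place, I would split $x=x_px_{p'}$ into its commuting $p$- and $p'$-parts and feed the two pieces into the results quoted above. The key input is that the hypothesis is inherited by every power of $x$: for all $n$ and all $g$, $[x^n,g]$ is again a $p$-element; in particular both $[x_p,g]$ and $[x_{p'},g]$ are $p$-elements for every $g$. Granting this, $x_p$ is a $p$-element all of whose commutators are $p$-elements, so the Guralnick--Malle theorem \cite[Theorem 1.4]{GM1} gives $x_p\in O_p(G)=1$; hence $x=x_{p'}$ is a $p'$-element.

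It then remains to treat a $p'$-element $x$ with $[x,g]$ a $p$-element for all $g$, where $O_p(G)=1$, and show $x\in Z(G)$. Here I would decompose $x=\prod_q x_q$ into its primary components; each $x_q$ is a power of $x$, so by the inheritance statement its commutators are again $p$-elements, and $x$ is central as soon as every $x_q$ is. For a $q$-element $y$ with $q\ne p$ I would induct on $o(y)$: when $o(y)=q$ the Guralnick--Robinson result \cite[Theorem D(ii)]{GR} applies directly and yields $y\in Z(G)$; when $o(y)=q^a$ with $a\ge 2$, induction gives $y^q\in Z(G)$, and I would pass to $\bar G=G/\langle y^q\rangle$, where $\langle y^q\rangle$ is a central $q$-group so that $O_p(\bar G)=1$ still holds. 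The prime-order case then forces $[y,G]\le\langle y^q\rangle$; since these commutators are simultaneously $p$-elements and elements of a $q$-group with $q\ne p$, they are trivial, so $y\in Z(G)$, completing this case.

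The hard part will be the inheritance statement, i.e.\ that $[x^n,g]$ is a $p$-element whenever every $[x,g]$ is. This is genuinely nontrivial: from the identity $[x^n,g]=[x,gx^{n-1}]\,[x,gx^{n-2}]\cdots[x,g]$ one sees that $[x^n,g]$ is a product of the $p$-elements $[x,gx^i]$, and a product of $p$-elements need not be a $p$-element (already $[x^2,g]=[x,gx]\,[x,g]$ is not formally a $p$-element). Proving it amounts to controlling the normal subgroup $V=[x,G]$, which is generated by the $p$-elements $[x,g]$ but need not itself be a $p$-group, and I expect to establish it through a further minimal-counterexample analysis that reduces the case $V\ne 1$ to that of an (almost) simple group $G$ possessing a single minimal normal subgroup. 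It is precisely in that reduced configuration that the rigidity of the condition ``$[x,g]$ is a $p$-element for all $g$'' must be exploited, and where the classification-based content packaged in the Guralnick--Malle and Guralnick--Robinson theorems does the decisive work.
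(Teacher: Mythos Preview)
Your reductions to $O_p(G)=1$ and $G=\langle x^G\rangle$ are correct and match the paper's. The gap is in what follows. Your strategy is to establish an \emph{inheritance} statement---that $[x^n,g]$ is a $p$-element for every $n$ and every $g$---and then feed the primary components of $x$ into \cite[Theorem~1.4]{GM1} and \cite[Theorem~D(ii)]{GR}. You correctly flag inheritance as the crux and propose to prove it by a further reduction to an almost simple configuration, where ``the classification-based content packaged in the Guralnick--Malle and Guralnick--Robinson theorems does the decisive work.'' But this is precisely where the argument does not close: in an almost simple group $G$ with $O_p(G)=Z(G)=1$ and an arbitrary nontrivial $x$ satisfying the hypothesis, neither cited theorem applies---GM requires $x$ to be a $p$-element, GR requires $x$ to have prime order---and to apply them to the primary components $x_q$ of $x$ you would first need to know that each $[x_q,g]$ is a $p$-element, i.e.\ you would need inheritance, which is what you are trying to prove. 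The loop is genuinely circular as stated.

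What the paper does after the same initial reductions is abandon the inheritance/primary-decomposition route entirely. It treats an abelian minimal normal subgroup by a quotient argument (checking that $O_p(G/N)=1$ persists), reduces the nonabelian case to $G$ almost simple with $G=S\langle x\rangle$, and then invokes the new Theorem~\ref{th:r-singular}: in any finite almost simple group and for any prime $p$, every nontrivial element $x$ admits some $g$ with $[x,g]$ a nontrivial $p'$-element. That statement is proved independently---by structure-constant computations with $p$-defect-zero characters (the Steinberg character when $p$ equals the defining characteristic, semisimple characters otherwise) for groups of Lie type, direct combinatorics for alternating groups, and computer verification for sporadic groups---and it is strictly stronger than what GM and GR give for elements of arbitrary order. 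Your detour through primary decomposition does not sidestep this almost simple analysis; it only postpones it, and the tools you name are not enough to finish it.
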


Theorem \ref{th:r-elements-commutators} generalizes  both Theorem D(ii) in \cite{GR} and Theorem 1.4 in \cite{GM1}, as discussed earlier. First, if we take $x\in G$ to be an element of prime order $r\neq p$, then  Theorem  \ref{th:r-elements-commutators} yields directly Theorem D(ii) from \cite{GR}. Next, suppose  $x\in G$ is a $p$-element and the commutator $[x,g]$ is a $p$-element for every $g\in G$, where $p$ is a prime. Then, by Theorem \ref{th:r-elements-commutators},  $\la x\ra \OB_p(G)$ is a normal $p$-subgroup of $G$. Since $\OB_p(G)$ is the largest normal $p$-subgroup of $G$, it follows that $\la x\ra \OB_p(G)=\OB_p(G)$ and hence  $x\in \OB_p(G)$. This recovers Theorem $1.4$ from \cite{GM1}.

To prove Theorem \ref{th:r-elements-commutators}, we first reduce to the case in which the group is almost simple. To address this case, we require the following result, which is of independent interest. Recall that a finite group $G$ is called almost simple with socle $S$ if there exists  a finite nonabelian simple group $S$ such that $S\unlhd G\leq \Aut(S)$.

\begin{thm}\label{th:r-singular}
Let $G$ be a finite almost simple group, let $x$ be a nontrivial element of $G$ and let $p$ be a prime. Then  there exists $g\in G$ such that $[x,g]$ is a nontrivial $p'$-element.
\end{thm}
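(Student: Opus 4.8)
The plan is to reformulate the conclusion and then reduce to the socle via the classification. Writing $K=x^G$ and noting $[x,g]=x^{-1}x^g$, the set of all commutators $[x,g]$ is exactly the translate $x^{-1}K$; thus the assertion is equivalent to saying that $x^{-1}K$ contains a nontrivial $p'$-element, i.e. that there is a nontrivial $p'$-element $z$ with $xz$ conjugate to $x$ in $G$. Since $\la x^G\ra$ is a nontrivial normal subgroup of the almost simple group $G$, it contains the socle $S$, and for every $g\in S$ one has $[x,g]\in S$; so it is natural to first search for a suitable $g$ inside $S$, which forces the target commutator into $S$. Writing $\theta$ for the automorphism of $S$ induced by $x$, the commutator then takes the twisted form $[x,g]=(g^{\theta})^{-1}g$, and the task becomes to show that this twisted commutator map hits a nontrivial $p'$-element as $g$ ranges over $S$. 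From here I would invoke the classification of finite simple groups, splitting into the alternating, sporadic, and Lie type cases, and within each into the relevant regimes for the prime $p$. The key point, which distinguishes the present statement from the weaker ``not a $p$-element'', is that in each regime I must exhibit a commutator of a definite $p'$-type: a nontrivial unipotent element when $p$ is not the defining characteristic, and a nontrivial semisimple element when it is.

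For $S=\Alt(n)$ (with $x$ possibly inducing an odd permutation when $G=\Sym(n)$) there is ample room: I would choose $g$ supported on a few points moved by $x$ so that $[x,g]$ is a $3$-cycle (order $3$, usable when $p\neq 3$) or a double transposition (order $2$, usable when $p=3$), leaving only finitely many small degrees for a direct check. For $S$ sporadic the statement is a finite verification: for each nontrivial class of $G$ and each prime $p$ dividing $|G|$ one checks that the translate $x^{-1}x^G$ meets a nontrivial $p'$-class, which can be extracted from the character table and class-multiplication data.

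The substantial case is $S$ of Lie type in characteristic $r$, and here I would separate two regimes. When $p\neq r$ it suffices to manufacture a nontrivial unipotent commutator, whose order is a power of $r$ and hence coprime to $p$; using a maximal torus $T$ normalizing a root subgroup and the polynomial dependence of $x^{-1}x^g$ on the torus parameter, one arranges $[x,g]$ to be a nontrivial $r$-element for all but the smallest $q$. When $p=r$ a nontrivial $p'$-element is precisely a nontrivial semisimple element, so I would use the Bruhat decomposition: after reducing by the Jordan decomposition $x=su$ (if the semisimple part $s\neq 1$, conjugate within $C_G(u)$, where $[x,g]=[s,g]^{u}$ has the same order as $[s,g]$, to replace $x$ by $s$), I would place a copy of $x$ in a unipotent radical $U$ and a conjugate $x^g$ in the opposite unipotent subgroup $U^{-}$, so that the product $x^{-1}x^g$ lands in the big cell and is regular semisimple for generic parameters; the prototype is $\SL_2(q)$, where moving a transvection into the opposite position yields a product of trace $2-t$ in the torus parameter $t$, regular semisimple for all but two values of $t$. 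For $x$ outer, inducing a field, graph, or graph-field automorphism $\theta$, the same geometry is carried out inside the coset $Sx$ using the twisted algebraic-group structure: by the Lang--Steinberg theorem the twisted commutator map $g\mapsto (g^{\theta})^{-1}g$ is dominant, and one locates a nontrivial semisimple (resp.\ unipotent) value in its image.

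The main obstacle, as is typical for such statements, is the uniform treatment of the groups of Lie type together with their outer automorphisms, and especially the defining-characteristic regime $p=r$, where unipotent commutators are unavailable and one must guarantee genuinely semisimple commutators of nontrivial type. The small-rank and small-field groups lie outside the generic geometric arguments and have to be settled separately, by hand or by machine computation, and the bookkeeping for diagonal and graph automorphisms acting on the twisted coset $Sx$ is where the most delicate work is concentrated.
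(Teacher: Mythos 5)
Your overall architecture (reduce to the socle, split by CFSG, treat Lie type by exhibiting explicit commutators of a ``definite $p'$-type'') contains a fatal gap in the regime $p\neq r$: it is simply not true that every nontrivial element of an almost simple group of Lie type in characteristic $r$ admits a nontrivial \emph{unipotent} commutator. Take $G=S=\PSL_2(q)$ with $q\equiv 3\pmod 4$ (e.g.\ $q=7$) and $x$ an involution. Every value $[x,g]=x\cdot x^g$ is a product of two involutions, so $x$ inverts the cyclic group $\la [x,g]\ra$. If some $[x,g]$ had order divisible by $r$, then $x$ would normalize a nontrivial $r$-subgroup $\la u\ra$; but Sylow $r$-subgroups of $\PSL_2(q)$ are TI, so $N_S(\la u\ra)$ lies in a Borel subgroup, of order $q(q-1)/2$, which is \emph{odd} when $q\equiv 3\pmod 4$ --- no involution lives there. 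Concretely, in $\PSL_2(7)$ a product of two involutions has order $1,2,3$ or $4$, never $7$. The theorem is still true for these pairs (there are commutators of order $3$), but they are semisimple, so your dichotomy ``unipotent when $p\neq r$'' cannot be implemented. The same example exposes why your proposed mechanism breaks down: the torus-normalizing-a-root-subgroup argument presupposes that a conjugate of $x$ normalizes a root subgroup and acts nontrivially on it, whereas elements of anisotropic maximal tori (such as these involutions) normalize no nontrivial unipotent subgroup at all. Your $p=r$ branch has a related hole: after the Jordan reduction you are left with $x$ semisimple or unipotent, but the big-cell/opposite-unipotent construction you describe only makes sense for unipotent $x$; the semisimple case at $p=r$ is not addressed, and for outer $x$ the dominance of $g\mapsto (g^{\theta})^{-1}g$ on the \emph{algebraic} group (Lang--Steinberg) does not by itself place any element of the finite group in the image of the finite twisted commutator map --- that requires Shintani-descent-type arguments.

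It is worth seeing how the paper avoids exactly this obstruction: its proof of the Lie-type case is character-theoretic, not constructive. Assuming every $[x,g]$ is trivial or $p$-singular, it evaluates the class-sum identity at a suitable irreducible character --- the Steinberg character when $p=r$, and when $p\neq r$ a $p$-defect zero character of degree prime to $r$ (Lemma \ref{lem:semisimple}, extended to $G$ via Clifford theory) --- which vanishes on all $p$-singular classes. This forces $|x^G|$ to divide $\chi(1)^2$; when $p=r$ one concludes $|x^G|$ is a prime power and applies Kazarin's lemma, and when $p\neq r$ one concludes $r\nmid|x^G|$, hence $x$ centralizes a Sylow $r$-subgroup and is itself an $r$-element (Lemma \ref{lem:Sylow-centralizers}), and only \emph{then} does Glauberman's $Z^*$-theorem produce a nontrivial $r$-element commutator, giving the contradiction $p=r$. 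In other words, unipotent commutators are ever produced only for unipotent $x$ --- consistent with the counterexample above. If you want to salvage a constructive proof, you must allow a different target class of $p'$-elements for elements lying in anisotropic tori; as written, the core mechanism of your Lie-type argument fails.
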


 For the proof of Theorem \ref{th:r-singular},  a direct calculation eliminates the cases when the socle of the almost simple group $G$ is an alternating group. For sporadic  and finite  groups of Lie type cases, we use character theory, in particular, the structure constant formula to determine whether an element can be written as a commutator of the form $[x,g]$ or not.

\begin{rem}\label{rem1} We record here some remarks on Theorem \ref{th:r-elements-commutators}.
\begin{enumerate}
\item[(i)] If  $x\in G$ and $[x,g]$ is either $1$ or $p$-singular for every $g\in G,$ then it is not true that $x$ is central modulo $\OB_p(G)$. For a counterexample, consider $G=\GL_2(3)$ and $x\in G$ of order $8$. Then the commutator $[x,g]$ has order $1,4$ or $6$ for every $g\in G$ but $G/\OB_2(G)\cong\SSS_3$ and so $x$ is not central modulo $\OB_2(G)$.  This is taken from \cite{GTT}.

\item[(ii)]   It is conjectured in \cite{GR,GTT} that if $x\in G$ is an element of prime order $p$  and $[x,g]$ is either $1$ or $p$-singular for every $g\in G$, then $x\in \OB_p(G)$. This conjecture is false if $x$ is not of prime order by the  example above. This conjecture holds true if $\OB_p(G)$ is abelian (see \cite[Theorem 6.2]{GTT}).  

\end{enumerate}
\end{rem}

We next present some applications of the preceding theorems to problems concerning products of conjugacy classes. 
 Let $G$ be a finite group and let $X\subseteq G$ be a nonempty set. For any $g\in G$, we define the conjugate of $X$ by $g$ as 
 $X^g=\{x^g:=g^{-1}xg:x\in X\}$, and the inverse of $X$ as  $X^{-1}=\{x^{-1}:x\in X\}$. We say that $X$ is a normal subset of $G$ if it is invariant under conjugation by $G$; that is, $X^g=X$ for all $g\in G.$ Clearly, every conjugacy class of $G$ is a normal subset of $G$, and conversely,  every normal subset of $G$ is a disjoint union of conjugacy classes of $G$. Given two nonempty subsets $X,Y\subseteq G$, we define their (set) product as $XY=\{xy:x\in X,y\in Y\}$. If $X$ and $Y$ are normal subsets of $G$, then  $X^{-1}$ and $XY$ are normal subsets of $G$ and we have the equality $XY=YX$. Let $K=x^G$ be the conjugacy class of $G$ containing an element $x\in G$.  Theorem \ref{th:r-elements-commutators} can be equivalently reformulated in terms of conjugacy class products as follows: `$K^{-1}K$ consists entirely of $p$-elements if and only if $x$ is central modulo $\OB_p(G)$.' We now turn to investigating the structure of finite groups under various natural restrictions on how the product of two conjugacy classes decomposes.

The study of products of conjugacy classes in finite groups has become an active area of research in finite group theory, garnering significant attention in recent years. For an overview of this topic, we refer the readers to  the survey by Arad and Herzog \cite{AH} and the more recent exposition in \cite{BFM2}. In 1985, Arad and Herzog \cite{AH} proposed the following conjecture: In a finite nonabelian simple group, the product of any two nontrivial conjugacy classes is never a single conjugacy class.
Despite considerable interest, this conjecture remains open, particularly in the case of finite simple classical groups. Various partial results have been established in support of the conjecture (see \cite{AH,FA,MTong,GMT,GN}), but a general proof remains elusive.
In a related direction, Thompson conjectured that if $G$ is a finite nonabelian simple group, then there exists a conjugacy class $C\subseteq G$ such that $C^2=G$. This conjecture has also attracted considerable attention, and significant progress has been made recently by  Larsen and Tiep \cite{LT2, LT1}, where they confirm the conjecture for all sufficiently large finite simple groups.

An element $g\in G$ is real if it is conjugate to its inverse, that is, there exists an element $t\in G$ such that $g^t=g^{-1}$. A conjugacy class $C$ of $G$ is called a real class if it contains a real element; in this case, we have $C=C^{-1}.$ 
In \cite{BFM3}, the authors proposed the following conjecture:  if a finite group $G$ possesses a conjugacy class $K$  such that $K^{-1}K=1\cup D\cup D^{-1}$ for some conjugacy class $D$ of $G$, then the subgroup $\la K\ra$ generated by $K$ is solvable. In the same paper, it is shown that  $G$ cannot be a simple group. Furthermore, under the assumption that $D=D^{-1}$, the authors prove that  $\la K\ra$ is solvable provided certain conditions on the sizes of $D$ and $K$ are met. When $K$ is real,  $D$ is real and hence $K^2=1\cup D$. In this case, the solvability of $\la K\ra$ follows directly from \cite[Theorem A]{BFM4}. In the present work, we confirm that this conjecture holds in full generality.

\begin{thm}\label{th:3-classes}
Let $G$ be a finite group and let $K$ be a conjugacy class of $G$. Suppose $K^{-1}K=1\cup D\cup D^{-1}$ for some conjugacy class $D$ of $G$. Then $\la K\ra$ is solvable.
\end{thm}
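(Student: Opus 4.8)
The plan is to translate the class-product hypothesis into a statement about orders of commutators and then feed it into the engine behind the other results, namely Theorem~\ref{th:r-singular}. Write $K=x^G$. A direct computation shows that every element of $K^{-1}K$ is conjugate to a commutator $[x,g]=x^{-1}x^g$; indeed $(x^{g_1})^{-1}x^{g_2}=[x,g_2g_1^{-1}]^{g_1}$, and conversely each $[x,g]$ lies in $K^{-1}K$. Since conjugation preserves order and $D,D^{-1}$ are single classes whose elements are mutually inverse, the set of orders of the nontrivial commutators $[x,g]$ consists of the single value $m:=|d|$ for $d\in D$. Thus the hypothesis is equivalent to: there is an integer $m\ge 1$ with $[x,g]=1$ or $|[x,g]|=m$ for every $g\in G$. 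If $m=1$ then $K=\{x\}$ and $\la K\ra=\la x\ra$ is abelian; so I would assume $m>1$ and fix a prime $p\mid m$, whence every nontrivial commutator $[x,g]$ is $p$-singular.

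The heart of the argument is a minimal-counterexample reduction to an almost simple section, contradicting Theorem~\ref{th:r-singular}. Suppose $N:=\la K\ra$ is nonsolvable. The goal is to locate a component $L$ of $N$ on which $x$ induces a nontrivial automorphism, producing an almost simple group $A$ with socle $\soc(A)=L/Z(L)$ and nontrivial image $\bar x$. Applying Theorem~\ref{th:r-singular} to $A$ and the prime $p$ produces $\bar h\in A$ with $[\bar x,\bar h]$ a nontrivial $p'$-element; choosing $\bar h$ inside the socle and lifting to $h\in L$, the commutator $[x,h]$ then lies in $L$ and maps onto $[\bar x,\bar h]$. Provided $p\nmid|Z(L)|$, the $p$-part of $[x,h]$ (which maps trivially into $\soc(A)$) is itself trivial, so $[x,h]$ is a genuine nontrivial $p'$-element of $G$. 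This contradicts the first paragraph, since a nontrivial commutator must have order $m$ and $p\mid m$.

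The main obstacle is making this reduction rigorous while keeping the witnessing commutator inside $G$. One cannot simply pass to the quotient $G/R$ by the solvable radical $R$: orders of commutators only decrease under quotients, so a $p'$-element commutator in $G/R$ is perfectly consistent with an order-$m$ commutator in $G$, and no contradiction results. The $p'$-element must therefore be produced inside $N$ itself, forcing one to work with the generalized Fitting subgroup $F^*(N)=F(N)E(N)$ directly and to treat separately the cases where $x$ centralizes $E(N)$ (so the nonsolvable action is carried by $\Out$ of the Fitting subgroup and must be extracted via coprime action on $F(N)$), where $x$ permutes the components nontrivially, and where $|Z(L)|$ is divisible by $p$. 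This is exactly the point at which one adapts the proof of the paper's auxiliary solvability theorem (the ``$p$-element'' version from the abstract) together with Theorem~\ref{th:r-elements-commutators}.

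The remaining and, I expect, most delicate point is that the auxiliary theorem is stated for $x$ a \emph{$p$-element}, precisely because coprime action of $x$ on the $p'$-part of $F(N)$ is what tames the solvable radical, whereas here $x$ is arbitrary. To shed this hypothesis I would exploit the extra content of the hypothesis beyond constant order---namely that $K^{-1}K$ is \emph{exactly} $1\cup D\cup D^{-1}$, only two nontrivial classes. A first step is the observation that $\la K\ra/\la K^{-1}K\ra$ is cyclic (all elements of $K$ coincide modulo $\la K^{-1}K\ra$), so that $\la K\ra$ is solvable if and only if $\la D\ra=\la K^{-1}K\ra$ is; this passes the problem to the class $D=d^G$ of commutators, all of order $m$. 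When $m=p^a$ is a prime power, $d$ is a $p$-element and the auxiliary theorem should apply to $\la D\ra$ directly; the general case must then be reduced to prime powers by working prime by prime through the divisors of $m$. I anticipate that the component-permuting case in the reduction and this final descent to prime-power order will be where the real work lies.
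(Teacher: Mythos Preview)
Your overall strategy matches the paper's: translate to constant commutator order, locate a component $L$ on which $x$ acts nontrivially, pass to an almost simple section, and invoke Theorem~\ref{th:almost-simple} (equivalently Theorem~\ref{th:r-singular}). But the obstacle you flag---that one cannot pass to $G/R(G)$---is a phantom, and your proposed workarounds (working inside $F^*(N)$ directly, passing to $\la D\ra$, a prime-by-prime descent on $m$) are detours whose details you yourself do not see how to complete.

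The point you are missing is that while the \emph{constant-order} reformulation does not lift back from quotients, the \emph{class-product} hypothesis $K^{-1}K=1\cup D\cup D^{-1}$ does pass down: for any $N\unlhd G$ one has $\overline{K}^{-1}\overline{K}=1\cup\overline{D}\cup\overline{D}^{-1}$ in $G/N$, with $\overline{D}$ either trivial or again a single class. In a minimal counterexample this immediately gives $R(G)=1$ and a unique minimal normal subgroup $N\cong S^k$, so each component $L\cong S$ is \emph{simple} and your $Z(L)$ obstruction evaporates. With $Z(L)=1$, set $M=\la x\ra L$ and $A=C_M(L)$; then $M'=L$ and $L\cap A=Z(L)=1$. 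Hence for $g\in M$ with $z=[x,g]\in L$ nontrivial and $\bar z$ of order $t$ in $\overline{M}=M/A$, one has $z^t\in L\cap A=1$, forcing $t=o(z)=m$ exactly. Commutator orders are therefore preserved in the almost simple section, and Theorem~\ref{th:almost-simple} applies for any prime dividing $m$, with no need to control $p$-parts through $Z(L)$. (The prime-power case $m=p^a$ is dispatched beforehand by Theorem~\ref{th:r-elements-commutators}; the case $L^x\neq L$ is handled by noting that $a^xa^{-1}$ is a $p$-element for any $p$-element $a\in L$, impossible when $m$ is not a prime power.) So keep the original hypothesis through the reduction rather than its order-theoretic shadow, and the argument closes in one step.
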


Finally, we prove the following result whose proof does not use the classification of finite simple groups. This can be viewed as a special case of Conjecture 2 in \cite{GTT}. 
Let $\ZB(G)$ denote the center of a group $G$.

\begin{thm}\label{th:two-primes-divisors}
 Let $G$ be a finite group and let $x\in G$ be a $p$-element for some prime $p$. Assume that there exist two distinct primes $r$ and $s$ such that  either $[x,g]=1$ or the order of $[x,g]$ is divisible by $rs$ for every $g\in G$. Then $x\in \ZB(G)$.
\end{thm}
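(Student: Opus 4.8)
The plan is to reduce the statement to an \emph{isolation} property of $x$, then confine its commutators into $O_{p'}(G)$ by a classification-free input, and finish by an elementary coprime-action argument. First I would record the only way the two-prime hypothesis enters at the outset: since $r\neq s$, a nontrivial $p$-element has prime-power order and so cannot have order divisible by $rs$; hence \emph{no} nontrivial commutator $[x,g]$ is a $p$-element. Now suppose a conjugate $x^g$ commutes with $x$. Then $[x,g]=x^{-1}x^g$ is a product of two commuting $p$-elements, hence itself a $p$-element, and therefore trivial by the previous sentence; thus $x^g=x$. This proves the isolation property $x^G\cap C_G(x)=\{x\}$.

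Next, because $x$ is a $p$-element satisfying $x^G\cap C_G(x)=\{x\}$, I would apply the isolation form of Glauberman's $Z_p^*$-theorem recalled in the introduction to conclude that $x$ is central modulo $K:=O_{p'}(G)$; equivalently $[x,g]\in K$ for every $g\in G$. To keep the proof independent of the classification of finite simple groups one uses here, in place of the $Z_p^*$-theorem for odd $p$, the classification-free result of Shult~\cite{Shult}, which is available precisely because $x$ is a $p$-element. I expect this confinement step to be the crux of the argument: everything preceding it is elementary, and everything following it reduces to coprime action inside $K$.

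With $[x,G]\subseteq K$ established, I would study the coprime action of the $p$-group $\la x\ra$ on the $p'$-group $K$. For each prime $q$ fix an $\la x\ra$-invariant Sylow $q$-subgroup $Q$ of $K$, which exists by coprime action. For $u\in Q$, the $\la x\ra$-invariance of $Q$ gives $[x,u]=(u^{-1})^{x}u\in Q$, so $[x,u]$ is a $q$-element; by hypothesis $[x,u]$ is trivial or of order divisible by $rs$, and a nontrivial $q$-element cannot be of the latter type, so $[x,u]=1$. Hence $Q\leq C_K(x)$. As this holds for every prime $q$, the subgroup $C_K(x)$ contains a full Sylow $q$-subgroup of $K$ for each $q$, whence $|C_K(x)|=|K|$ and therefore $[K,x]=1$.

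Finally, for arbitrary $g\in G$ write $x^g=x\,[x,g]$ with $[x,g]\in K$. Since $x$ centralizes $K$ it commutes with $[x,g]$, and the identity $[x,xy]=[x,y]$ (with $y=[x,g]$) yields $[x,x^g]=[x,[x,g]]=1$. Thus $x^g$ commutes with $x$, so $x^g\in x^G\cap C_G(x)=\{x\}$ and $x^g=x$. As $g$ was arbitrary, $x\in Z(G)$, as required. The single non-elementary ingredient is the confinement $[x,G]\subseteq O_{p'}(G)$, and it is exactly there that Shult's theorem is invoked to avoid the classification.
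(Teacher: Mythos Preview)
Your argument is logically sound \emph{provided} you invoke the full $Z_p^*$-theorem at the confinement step: from $x^G\cap C_G(x)=\{x\}$ you correctly deduce $[x,G]\subseteq O_{p'}(G)$, and the subsequent coprime-action argument and the closing step are clean and correct. The approach is genuinely different from the paper's, which proceeds by minimal counterexample: there one first shows that $x\in Z(H)$ for every proper subgroup $H$ containing $x$, then applies Shult's lemma to force $G$ solvable, and finishes by an elementary Fitting argument. Your route is more direct and conceptual, isolating the $Z_p^*$ input cleanly.

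The gap is the sentence about replacing the $Z_p^*$-theorem for odd $p$ by Shult's result to keep the proof classification-free. Shult's theorem (Lemma~\ref{lem:central} here) says only that a solvable subgroup central in every solvable overgroup is central in $G$; it does \emph{not} yield the confinement $[x,G]\subseteq O_{p'}(G)$ from the isolation property $x^G\cap C_G(x)=\{x\}$. The odd-prime $Z_p^*$-theorem, as noted in the introduction, is due to Guralnick--Robinson and Artemovich and relies on the classification. So as written your proof is correct but not classification-free; the paper achieves classification-independence precisely by avoiding the $Z_p^*$ step and using Shult in the different way described above. If you want to salvage a classification-free version along your lines, you would need to show directly (without $Z_p^*$) that $\la x\ra$ is central in every solvable subgroup containing it---but that essentially becomes the paper's argument.
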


Th paper is  structured as  follows. In Section \ref{sec2}, we present the necessary preliminaries. The proofs of Theorems \ref{th:r-elements-commutators} and \ref{th:r-singular} are given in Section \ref{sec3}. Finally, Theorems \ref{th:3-classes} and \ref{th:two-primes-divisors} will be proved in the last section. 

\medskip

Our notation is standard. We follow \cite{Isaacs-fg} for  finite group theory, \cite{Isaacs-ct} for the character theory of finite group, and \cite{ATLAS} for the notation of finite simple groups. Let $p$ be a prime and let $n\ge 1$ be an integer. The $p$-part of $n$, denoted $n_p$, is the largest power of $p$ dividing $n,$ so that $n=n_pn_{p'}$, where $n_{p'}$ is relatively prime to $p$. We denote by $\pi(n)$ the set of all distinct prime divisors of $n.$ If $G$ is a finite group, we write $\pi(G):=\pi(|G|)$ for the set of all distinct prime divisors of the order of $G$. 

\section{Preliminaries}\label{sec2}

Let  $G$ be a finite group. A component of $G$ is a finite quasisimple subnormal subgroup of $G$. Recall that  a finite group $L$ is quasisimple if it is perfect, that is, $L=L'$, and  $L/\ZB(L)$ is a finite nonabelian simple group. The layer of $G$, denoted by $\EB(G)$, is a normal subgroup of $G$ generated by all  components of $G$.  The generalized Fitting subgroup of $G$, denoted by $\FB^*(G)$, is defined as the product $\FB^*(G)=\FB(G)\EB(G)$. As usual, $\ZB(G)$ denotes the center of $G$.  For a subgroup $H$ of $G$, the centralizer and normalizer of $H$ in $G$ are denoted by $\CB_G(H)$ and $\NB_G(H)$, respectively. We denote by $\RB(G)$ the solvable radical of $G$, that is, the largest normal solvable subgroup of $G$.

The famous Burnside $p^\alpha$-lemma (\cite[Theorem 3.9]{Isaacs-ct}) states that if $G$ is a finite nonabelian simple group, then the trivial class is the only conjugacy class of $G$ which has prime power size. In particular, if a finite group $G$ has a nontrivial conjugacy class of prime power size, then $G$ is not simple. Kazarin   extended this further by proving the following.

\begin{lem}\label{lem:pa}
Let $G$ be a finite group and let $x\in G.$ If $|x^G|$ is a power of a prime, then $\la x^G\ra$ is a solvable group.
\end{lem}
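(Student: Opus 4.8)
The plan is to argue by induction on $|G|$: suppose the lemma fails and let $G$ be a counterexample of least order, so $|x^G|=p^a$ while $\la x^G\ra$ is not solvable. If $a=0$ then $x\in Z(G)$ and $\la x^G\ra=\la x\ra$ is abelian, so $a\geq 1$. The first goal is to reduce to $\la x^G\ra=G$. Put $M=\la x^G\ra\unlhd G$ and suppose $M<G$. Since $M\unlhd G$, the class $x^G$ splits into $M$-classes of equal size, so $|x^M|$ divides $|x^G|=p^a$ and is again a prime power; by minimality $L:=\la x^M\ra$ is solvable. For every $g\in G$ we have $L^g\unlhd M^g=M$ with $L^g\cong L$ solvable, and $M=\la x^g:g\in G\ra=\la L^g:g\in G\ra$, so $M$ is generated by normal solvable subgroups and is therefore solvable, contradicting the choice of $G$. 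Hence $\la x^G\ra=G$, and it remains to derive a contradiction by showing $G$ is solvable; note that $x$ now lies in no proper normal subgroup of $G$.

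Next I would remove the solvable part. Passing to $G/Z(G)$ and to $G/R(G)$, the image of $x$ still normally generates the quotient and still has class size dividing $p^a$; if that image is central the quotient is abelian, and otherwise minimality forces the quotient to be solvable, in either case making $G$ solvable unless $Z(G)=R(G)=1$. So in the minimal counterexample $F(G)=1$ and $F^*(G)=E(G)=S_1\times\cdots\times S_k$ is a direct product of nonabelian simple groups with $C_G(F^*(G))=1$; thus $F^*(G)\unlhd G\leq\Aut(F^*(G))$.

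To attack this almost simple configuration I would use that $q\nmid|x^G|$ for every prime $q\neq p$, so $C_G(x)$ contains a full Sylow $q$-subgroup of $G$, and hence $x$ centralizes a Sylow $q$-subgroup of $N:=F^*(G)$ for each such $q$. Since $|N|$ is divisible by at least three primes, these impose strong restrictions on $x$. The decisive input is the number-theoretic heart of Burnside's $p^\alpha$-lemma: for $\chi\in\Irr(G)$ the quantity $|x^G|\chi(x)/\chi(1)$ is an algebraic integer, so whenever $p\nmid\chi(1)$ one has $\chi(x)=0$ or $x\in Z(\chi)$. Because $\la x^G\ra=G$, the latter forces $G/\ker\chi$ to be cyclic and hence $\chi$ to be linear; therefore every nonlinear $\chi$ with $p\nmid\chi(1)$ vanishes at $x$. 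Feeding this into column orthogonality $\sum_{\chi}\chi(1)\chi(x)=0$ and reducing modulo $p$ shows that $G$ cannot be perfect, which already recovers the $p^\alpha$-lemma.

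The hard part is upgrading non-perfectness to solvability for the full product-of-simples configuration: here Burnside's relation alone only yields $p\mid|G:G'|$, and one must genuinely control the perfect subgroup $E(G)$, the permutation action of $G$ on the factors $S_i$, and the characters of $p$-power degree. I would reduce to a single simple factor and complete the analysis either by the structure-constant and character-theoretic computations of the type used elsewhere in this paper, or, where that is unwieldy, by appealing to the classification of finite simple groups. This almost simple endgame, rather than any of the preceding reductions, is where I expect the real difficulty to lie.
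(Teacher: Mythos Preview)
The paper does not prove this statement: its entire proof is the single sentence ``This is \cite[Theorem]{Kazarin},'' and immediately afterwards the paper stresses that Kazarin's argument does not rely on the classification of finite simple groups. So there is no in-paper argument to compare against beyond that citation.

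Your reductions are sound. The claim that $|x^M|$ divides $|x^G|$ is correct (since $|x^M|=|MC_G(x):C_G(x)|$ divides $|G:C_G(x)|$), and the passage from $\la x^M\ra$ solvable to $M=\la L^g:g\in G\ra$ solvable is valid because each $L^g$ is a solvable normal subgroup of $M=M^g$ and hence lies in $R(M)$. Passing to $G/R(G)$ then legitimately reduces to $R(G)=1$ and $F^*(G)=E(G)$. Your Burnside-style character step is also correct and recovers exactly the classical $p^\alpha$-lemma: the minimal counterexample cannot be perfect.

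The gap is everything after that. You explicitly flag the upgrade from ``not perfect'' to ``solvable'' as the hard part and propose to finish either with structure-constant computations ``of the type used elsewhere in this paper'' or with the classification. Neither is carried out, and both are in effect CFSG arguments---the structure-constant calculations in Theorem~\ref{th:almost-simple} of this paper proceed case-by-case through the simple groups. This misses the point of the lemma: Kazarin's contribution is precisely a short, CFSG-free refinement of Burnside's character-theoretic analysis that forces a nontrivial solvable normal subgroup to appear without ever inspecting the possible composition factors. Your proposal does not identify that extra ingredient, so as written it reproves Burnside's lemma and then leaves Kazarin's actual theorem as an unexecuted plan resting on far heavier machinery than the original result requires.
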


\begin{proof}
This is \cite[Theorem]{Kazarin}.
\end{proof}

We need the following result due to Shult. 
\begin{lem}\label{lem:central}
Let $A$ be a solvable subgroup of a finite group $G$ and suppose that $A$ lies in the center of every solvable subgroup of $G$ containing $A$. Then $A$ lies in the center of $G$.
\end{lem}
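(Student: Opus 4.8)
The plan is to recast the hypothesis and reduce it to a statement about solvable normal subgroups. Taking the solvable overgroup to be $A$ itself shows $A$ is abelian, and the hypothesis says precisely that every solvable subgroup $H$ with $A\le H$ is contained in $C_G(A)$. The first thing I would record is the following elementary reduction: if $G$ has a solvable normal subgroup $N$ with $A\le N$, then $A\le Z(G)$. Indeed, for each $g\in G$ the subgroup $\la N,g\ra=N\la g\ra$ is an extension of the solvable group $N$ by a cyclic group, hence solvable, and it contains $A$, so $g\in C_G(A)$; as $g$ is arbitrary, $A\le Z(G)$. Thus it suffices to show that $A$ lies in some solvable normal subgroup, i.e.\ that $A\le R(G)$.

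Next I would take a counterexample $G$ of minimal order. For any proper subgroup $H$ with $A\le H<G$ the pair $(H,A)$ again satisfies the hypothesis, so by minimality $A\le Z(H)$; hence every proper overgroup of $A$ lies in $C:=C_G(A)$, which is therefore the unique maximal subgroup containing $A$ and is itself maximal in $G$, with $A\le Z(C)$ and $N_G(A)=C$. If $G$ were solvable, or if $A\trianglelefteq G$, the reduction above would finish at once, so $G$ is non-solvable and $A$ is not normal. I would then eliminate abelian chief factors: if $M$ is an abelian minimal normal subgroup, the induction hypothesis applied to $G/M$ gives $[A,G]\le M$, so every conjugate $A^g$ lies in $AM\le C$ and hence commutes with $A$; then $\la A^G\ra$ is abelian and normal, contradicting the reduction. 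Hence $R(G)=1$, so $F^*(G)=E(G)=\soc(G)$ is a direct product of non-abelian simple groups with $C_G(E(G))=1$.

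In this semisimple situation $A\neq 1=C_G(E(G))$, so $A$ fails to centralise some simple factor $S$ of $E(G)$; since every proper overgroup of $A$ lies in $C$, the subgroup $\la A,S\ra$ must equal $G$, and a short argument (any factor outside $S^G$ would embed in the abelian quotient $G/S^G$) forces $E(G)=S^G$ and $G=A\,E(G)$ with $A$ abelian. The crucial point is now that \emph{$A$ centralises every $A$-invariant solvable subgroup $Q$ of $E(G)$}: for such $Q$ the subgroup $AQ$ is solvable and contains $A$, so $A\le Z(AQ)$ and $Q\le C_{E(G)}(A)$. Applying this to $A$-invariant Sylow subgroups, which exist at every prime $q\nmid|A|$ by coprime action (and, when $A$ is a $q$-group, also at $q$ by intersecting a Sylow $q$-subgroup of $G$ containing $A$ with $E(G)$), shows that $C_{E(G)}(A)$ contains a full Sylow $q$-subgroup of $E(G)$ for all such $q$. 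In particular, if $A$ is a prime-power group or $(|A|,|E(G)|)=1$, then $C_{E(G)}(A)=E(G)$, forcing $A\le C_G(E(G))=1$, the desired contradiction.

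The main obstacle is exactly the general case in which $A$ shares several primes with the socle: one must still produce enough $A$-invariant solvable subgroups of $E(G)$ at the primes dividing $|A|$ to conclude $C_{E(G)}(A)=E(G)$. Equivalently, one must rule out a faithful action of a finite abelian group $A$ on a direct product of non-abelian simple groups for which $A$ centralises every $A$-invariant solvable subgroup yet $C_{E(G)}(A)\neq E(G)$---for instance the extreme configuration $C_{E(G)}(A)=1$, in which $A\cap E(G)=1$, $A$ acts fixed-point-freely on $E(G)$ with no nontrivial $A$-invariant solvable subgroup, and $\pi(E(G))\subseteq\pi(A)$. Handling this mixed-prime action is the substance of Shult's theorem, and is carried out without appeal to the classification; here I would invoke his argument (building on Thompson's nilpotency theorem for fixed-point-free automorphisms of prime order) to complete the proof.
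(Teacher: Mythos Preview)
The paper does not prove this lemma; it simply records it as Theorem~1 of Shult and cites \cite{Shult}. Your proposal, after a sequence of (largely correct) reductions to the case $R(G)=1$ and $G=A\cdot E(G)$ with $E(G)$ a direct product of non-abelian simple groups, explicitly concedes that the remaining configuration---where $A$ shares several primes with the socle---``is the substance of Shult's theorem'' and proposes to invoke his argument. So in the end you, like the paper, are deferring to Shult; you have merely done more of the preliminary setup yourself rather than citing the result as a black box.

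One technical wrinkle in your write-up: the claim that $A$-invariant Sylow $q$-subgroups of $E(G)$ exist for every prime $q\nmid|A|$ ``by coprime action'' is not justified as stated. The standard coprime-action theorems require $\gcd(|A|,|E(G)|)=1$ (together with solvability of one factor), not merely $q\nmid|A|$; for instance a transposition in $\SSS_5$ normalises no Sylow $5$-subgroup of $\AAA_5$. In your situation the extra hypothesis on $A$ may rescue this, but you have not shown how. Since you are not claiming an independent proof anyway, this is a side issue: the honest summary is that both the paper's proof and yours ultimately rest on Shult, and your reductions, while instructive, do not bypass the hard core of his argument.
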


\begin{proof}
This is Theorem 1 in \cite{Shult}.
\end{proof}

It is worth noting that the proofs of both Lemmas \ref{lem:pa} and \ref{lem:central} do not rely on the classification of finite simple groups.

For a finite group $G$, as usual, we denote by $\Irr(G)$ the set of complex irreducible characters of $G$. A character $\chi\in\Irr(G)$ is said to have $p$-defect zero for a prime $p$ if the quotient $|G|/\chi(1)$ is not divisible by $p$; equivalently, $\chi(1)_p=|G|_p$. 
For finite simple groups of Lie type $G$, it is well-known that $G$ always possesses irreducible characters of $p$-defect zero for each prime divisor $p$ of $G$ (see \cite{Michler,Willems}).  Inspecting  \cite{Michler,Willems} and the proofs of Propositions 2.4 and 2.5 in \cite{Malle}, we obtain the following.

\begin{lem}\label{lem:semisimple}
Let $S$ be a finite nonabelian simple group of Lie type in defining characteristic $r$. For each prime $p\neq r$ dividing $|S|$, $S$ has an irreducible character of $p$-defect zero whose degree is not divisible by $r$.
\end{lem}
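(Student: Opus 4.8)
The plan is to exhibit the required character as a \emph{semisimple character} in Lusztig's parametrisation, because such characters automatically have degree prime to the defining characteristic $r$; the work then lies in choosing the semisimple parameter so that the degree also captures the full $p$-part of $|S|$, which is exactly the condition of $p$-defect zero.

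First I would set up the algebraic group. Write $S$ as the simple quotient of $\mathbf{G}^F$, where $\mathbf{G}$ is a connected reductive group over $\overline{\FF}_r$ with Steinberg endomorphism $F$ and $q$ the associated power of $r$. After enlarging to a group with connected centre (as one passes from $\SL_n,\SU_n$ to $\GL_n,\GU_n$) and checking that the characters involved restrict to, or inflate from, $S$ without altering the $r'$-part of the degree, all computations can be carried out in $\mathbf{G}^F$ and its dual $\mathbf{G}^*$. Recall that to each $F$-stable semisimple class $[s]$ of $\mathbf{G}^*$ Lusztig attaches a semisimple character $\chi_s\in\Irr(\mathbf{G}^F)$ with $\chi_s(1)=[\mathbf{G}^{*F}:C_{\mathbf{G}^*}(s)^F]_{r'}$, a number coprime to $r$; concretely $\chi_s=\pm R_{\mathbf{T}}^{\theta}$ for a maximal torus $\mathbf{T}$ and a character $\theta$ in general position.

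Next I would translate $p$-defect zero into a condition on centralizer orders. Since $p\neq r$, the $p$-part of $\chi_s(1)$ equals $|\mathbf{G}^F|_p/|C_{\mathbf{G}^*}(s)^F|_p$. Thus, after descending to $S$, the character $\chi_s$ has $p$-defect zero exactly when $|C_{\mathbf{G}^*}(s)^F|_p$ is as small as possible, namely equal to the $p$-part of the central kernel lost in passing from $\mathbf{G}^F$ to $S$. The problem therefore reduces to producing a semisimple element $s\in\mathbf{G}^{*F}$ — equivalently an $F$-stable maximal torus $\mathbf{T}^*$ carrying a regular element, so that $C_{\mathbf{G}^*}(s)=\mathbf{T}^*$ — whose order has this minimal $p$-part. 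To find one I would use the Sylow theory of finite reductive groups: $|\mathbf{G}^F|_{r'}$ factors into cyclotomic values $\Phi_d(q)$, the prime $p$ divides only those $\Phi_d(q)$ with $d$ the order of $q$ modulo $p$, and the $\Phi_d$-torus theory lets one select a maximal torus whose order involves these factors only to the unavoidable central multiplicity.

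The hard part will be making this construction uniform and exception-free. First, the torus selection must be run across every Lie type and every twisted form, and there remain finitely many small-rank or small-$q$ configurations in which either no torus of the correct order exists or a generic element of it fails to be regular; these must be dispatched by hand against the known character tables, which is exactly why the statement is obtained by inspecting the proofs of Malle's Propositions 2.4 and 2.5 rather than quoted as a single clean theorem. Second, the centre/isogeny bookkeeping in the passage between $\mathbf{G}^F$ and the simple group $S$ has to be tracked carefully, since — as already the case $S=\PSL_2(7)$, $p=2$ illustrates — whether a fixed semisimple character is of $p$-defect zero can depend on the division by the order of the centre.
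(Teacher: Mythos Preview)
Your overall strategy coincides with the paper's: realise the desired character as a Deligne--Lusztig character $\pm R_{T,s}$ attached to a regular semisimple element $s$ in an $F$-stable maximal torus, so that the degree $|G|_{r'}/|T|$ is automatically prime to $r$, and then arrange that this degree has full $p$-part; both you and the paper anticipate a list of small exceptions to be checked directly. In that sense your proposal is correct.

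The execution differs in one structural respect worth noting. You plan to choose, for each prime $p\neq r$, a maximal torus via $\Phi_d$-theory (where $d$ is the multiplicative order of $q$ modulo $p$) whose order avoids $p$ as far as the centre allows. The paper instead follows Malle's proofs of Propositions~2.4 and~2.5 to produce, once and for all and independently of $p$, \emph{two} maximal tori $T_1,T_2$ of $G^*$ with $\gcd(|T_1|,|T_2|)=|Z(G)|$ and $\gcd(|T_1|/d,|T_2|/d)=1$, each containing a regular semisimple element of $[G^*,G^*]$; then for any prime $p\neq r$ dividing $|S|$, one of the two associated characters of $S$ (of degree $|S|_{r'}/(|T_i|/d)$) is of $p$-defect zero. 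This two-tori trick sidesteps the prime-by-prime cyclotomic analysis and makes the uniform case genuinely uniform; the descent to $S$ is handled not by passing to connected centre as you propose but by requiring $s\in[G^*,G^*]$, which forces $Z(G)$ into the kernel. Your route would work, but the case analysis for ``minimal $p$-part'' is heavier than the coprimality argument, and your assertion that $p$ divides only $\Phi_d(q)$ with $d$ the order of $q$ mod $p$ needs adjusting (for odd $p$ one also has $p\mid\Phi_{dp^i}(q)$, though only to the first power).
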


 \begin{proof}  Let $S$ be a finite simple group of Lie type in defining characteristic $r$. We assume that $S\not\in \{ {}^2{\rm F}_4(2)',\PSp_4(2)'\}$. We can find a simple algebraic group  $\mathbf{G}$ of simply connected type in defining characteristic $r$ and a Steinberg endomorphism $F: \mathbf{G}\rightarrow \mathbf{G}$   with finite group of fixed points   $G:=\mathbf{G}^F$ such that $S=G/\ZB(G)$. Let $\mathbf{G}^*$ be the dual group of $\mathbf{G}$ with corresponding Steinberg endomorphism  $F^*: \mathbf{G}^*\rightarrow \mathbf{G}^*$. Let $\mathbf{T}\leq \mathbf{G}^*$ be an $F^*$-stable maximal torus containing an $F^*$-stable regular semisimple element $s\in T:=\mathbf{T}^{F^*}$. Then the corresponding Deligne-Lusztig character $R_{T,s}$ is, up to sign, a semisimple  irreducible complex character of $G$ of degree $\pm|G:T|_{r'}$ which is relatively prime to $r$, (\cite[Corollary 7.3.5]{Carter}). If $s\in [G^*,G^*]$, then $R_{T,s}$ contains $\ZB(G)$ in its kernel, so it defines a character of $S$ (see \cite{Lusztig} and \cite[p. 174]{Malle}).
   
 Assume $S=\PSL_2(q)$ with $q\ge 4$. If $S\cong \PSL_2(4)\cong\PSL_2(5)$, then the result follows by using GAP \cite{GAP}. Thus we may assume that $q\ge 7$. It is well-known that $S$ has irreducible characters $\chi_1$ and $\chi_2$ of degree $q-1$ and $q+1$, respectively (see \cite[Theorems 38.1, 38.2]{Dornhoff}). Since $|S|=q(q^2-1)/\gcd(2,q-1)$, either $\chi_1$ or $\chi_2$ has $p$-defect zero since $p\mid (q^2-1).$ Clearly, $r$ does not divide $\chi_i(1)$ for $i=1,2.$
 
 Assume $S=\PSL_3(q)$ with $q\ge 3.$ We have $$|S|=q^3(q-1)^2(q+1)(q^2+q+1)/\gcd(3,q-1).$$ Note that $\gcd(q-1,q^2+q+1)=\gcd(3,q-1)$. By using GAP \cite{GAP}, we may assume that $q\ge 8$.  By \cite[Table 2]{SF}, there exist $\chi_1,\chi_2\in\Irr(S)$ with $\chi_1(1)=(q-1)^2(q+1)$ and $\chi_2(1)=q^2+q+1$. Now it is easy to see that  for every prime $p\neq r$ dividing $|S|$, either $\chi_1$ or $\chi_2$ has $p$-defect zero and that $r$ does not divide $\chi_i(1)$ for $i=1,2$. 
 
 Assume $S=\PSU_3(q)$ with $q\ge 3.$ Again, we can assume  $q \ge 8$. We have $$|S|=q^3(q+1)^2(q-1)(q^2-q+1)/\gcd(3,q+1).$$ We can use the same argument as in the previous case using \cite{SF}. We see that $S$ has two irreducible characters $\chi_1$ and $\chi_2$ of degree $(q+1)^2(q-1)$ and $q^2-q+1$, respectively. For each prime $p\neq r$ dividing $|S|$, one of these characters will have $p$-defect zero.
 
 Assume $S=\PSp_4(q)$ with $q\ge 3.$ We have $$|S|=q^4(q-1)^2(q+1)^2(q^2+1)/\gcd(2,q-1)$$ and $\gcd(2,q^2+1)=\gcd(2,q-1).$
 The character tables of $S$ are given in \cite{Enomoto} for $q$ even and in \cite{Srinivasan} for $q$ odd.  Using GAP \cite{GAP}, we may assume $q\ge 9$. Now $S$ has irreducible characters $\chi_1$ and $\chi_2$ of degree $(q^2-1)^2$ and $q^4-1$. It is readily seen that one of these two characters will have $p$-defect zero as wanted.

  Let $\mathcal{L}$ be the set consisting of the following simple groups: $$\PSp_4(2)',{}^2{\rm F}_4(2)',{}^2{\rm G}_2(3)'\cong \PSL_2(8),{\rm G}_2(2)'\cong \PSU_3(3),$$ $$\PSL_6(2),\PSL_7(2),\PSU_4(2),\PSp_6(2),\PSp_8(2),$$ and \[\PSO_8^\pm(2),\PSO_8^\pm(3),\PSL_2(q) (q\ge 4),\PSL_3(q) (q\geq 3),\PSU_3(q)(q\geq 3), \PSp_4(q) (q\ge 3)\] where $q$ is a power of $r.$

 If $S$ is one of the remaining  simple groups in the list $\mathcal{L}$, then we can check using GAP \cite{GAP} that the lemma holds in these cases as well.
 
 Assume that $S\not\in \mathcal{L}$.   Assume first that $S$ is an exceptional simple group of Lie type in characteristic $r$.
 It follows from the proof of Proposition 2.4 in \cite{Malle} that $G^*$ has two maximal tori $T_1$ and $T_2$ such that $$d=\gcd(|T_1|,|T_2|)=|\ZB(G)|, \text{and } \gcd(|T_1|/d,|T_2|/d)=1$$ and that each $T_i$ has a regular semisimple element $s_i\in [G^*,G^*]$ of prime order. Thus the corresponding Deligne-Lusztig characters $R_i:=R_{T_i,s_i}$ are irreducible characters of $S$, up to sign. Furthermore, for $i=1,2,$ $$R_i(1)=\frac{|G|_{r'}}{|T_i|}=\frac{|S|_{r'}}{|T_i|/d}.$$ Hence, for each prime $p\neq r$ dividing $ |S|$, either $R_1$ or $R_2$ has $p$-defect zero.

 Finally, assume that $S$ is a simple classical group of Lie type in characteristic $r$. Since $S$ is not in the set $\mathcal{L}$, it follows from the proof of Proposition 2.5 in \cite{Malle} that $G^*$ will possess two maximal tori $T_1$ and $T_2$ which satisfy the same properties as in the exceptional cases. Therefore, the conclusion of the lemma follows as before.
  \end{proof}

 We also need the following result about the centralizers of Sylow $r$-subgroups of almost simple groups whose socle is a finite simple group of Lie type in characteristic $r$.
 
 \begin{lem}\label{lem:Sylow-centralizers}
 Let $G$ be a finite almost simple group with socle $S$, where $S$ is  a finite simple group of Lie type in characteristic $r$.  Let $R$ be a Sylow $r$-subgroup of $G$. Then $\CB_G(R)\leq R.$ 
 \end{lem}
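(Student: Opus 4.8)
The plan is to establish the sharper statement that $C_{\Aut(S)}(U)=Z(U)$, where $U:=R\cap S$ is a Sylow $r$-subgroup of $S$ (note $R\cap S\in\Syl_r(S)$ since $S\unlhd G$). The lemma then drops out in one line: because $U\le R$ and $G\le\Aut(S)$, we get $C_G(R)\le C_G(U)\le C_{\Aut(S)}(U)=Z(U)\le U\le R$. So the whole task is to compute the centraliser of a maximal unipotent subgroup inside the full automorphism group.

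To do this I would first install the standard machinery. Realise $\Inndiag(S)=\mathbf{G}^F$ for a simple algebraic group $\mathbf{G}$ of adjoint type in characteristic $r$ with Steinberg endomorphism $F$, and fix an $F$-stable Borel subgroup $\mathbf{B}=\mathbf{U}\rtimes\mathbf{T}$ with $\mathbf{U}=R_u(\mathbf{B})$, arranged so that $U=\mathbf{U}^F$. The finite Borel--Tits theorem, applied to the maximal unipotent subgroup $U$, gives $N_{\Inndiag(S)}(U)=\mathbf{B}^F=:B_0$. Since the standard field and graph automorphisms $\phi_0,\gamma_0$ can be chosen to stabilise $\mathbf{B}$ and $\mathbf{T}$, hence $U$, and since $\Aut(S)=\Inndiag(S)\la\phi_0,\gamma_0\ra$, the Dedekind modular law yields $N_{\Aut(S)}(U)=B_0\la\phi_0,\gamma_0\ra$. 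As $C_{\Aut(S)}(U)\le N_{\Aut(S)}(U)$, any element $s$ centralising $U$ may be written $s=b\,\phi_0^{\,i}\gamma_0^{\,j}$ with $b\in B_0$.

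The heart of the argument is the induced action of such an $s$ on the abelianisation $V:=U/[U,U]$, which decomposes as the direct sum of the images of the finite simple root subgroups $U_\alpha$. On $V$ the unipotent part of $b$ acts trivially, its torus part acts by the scalars $\alpha(t)$, the graph part $\gamma_0^{\,j}$ permutes the summands by the graph symmetry of the simple roots, and the field part $\phi_0^{\,i}$ acts by a power of the Frobenius on each summand. Demanding that $s$ act trivially on $V$ forces in turn the summand-permutation to be trivial (so $\gamma_0^{\,j}=1$), the Frobenius twist to be trivial (so $\phi_0^{\,i}=1$), and $\alpha(t)=1$ for every simple root $\alpha$, whence $t\in Z(\mathbf{G})=1$ because $\mathbf{G}$ is adjoint. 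Thus $s\in C_{B_0}(U)$, and the direct check that $ut\in B_0$ centralises $U$ exactly when $t=1$ and $u\in Z(U)$ gives $C_{B_0}(U)=Z(U)$. Hence $C_{\Aut(S)}(U)=Z(U)$, as required.

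I expect the main obstacle to lie in the bookkeeping for the twisted and small cases rather than in the generic idea. For the twisted types ${}^2A_n,{}^2D_n,{}^3D_4,{}^2E_6$ and the Suzuki--Ree families, the simple root subgroups of $U$ are $F$-orbits of root subgroups and the relevant outer automorphisms become graph-field automorphisms, so both the identification $N_{\Aut(S)}(U)=B_0\la\phi_0,\gamma_0\ra$ and the module-theoretic step on $V$ must be reinterpreted, and one must verify that a nontrivial such automorphism still acts nontrivially on $V$. Finally, a short list of non-generic small groups --- among them $\PSp_4(2)'$, ${}^2{\rm F}_4(2)'$, $\mathrm{G}_2(2)'$ and similar exceptions where $\Out(S)$ or the structure of $U$ is irregular --- should be disposed of by direct computation in GAP, exactly as with the exceptional list $\mathcal{L}$ in Lemma \ref{lem:semisimple}.
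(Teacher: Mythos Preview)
Your approach is correct in outline but takes a substantially longer route than the paper's. The paper's proof is five lines: with $R_0=R\cap S$ and $H=N_G(R_0)$ one has $R\le H$ and $C_G(R)\le C_G(R_0)\le H$; then \cite[Corollary~3.1.4]{GLS3} gives $F^*(H)=O_r(H)$, and Bender's theorem \cite[31.13]{Aschbacher} yields $C_G(R)=C_H(R)\le C_H(O_r(H))\le O_r(H)\le R$. You instead set out to compute $C_{\Aut(S)}(U)$ directly via the action on $U/[U,U]$, which amounts to reproving from scratch the consequence of the GLS3 result that the paper simply cites. Your generic step is sound and the sharper statement $C_{\Aut(S)}(U)=Z(U)$ is indeed true, but, as you correctly anticipate, the twisted types and the small-rank exceptions would absorb most of the labour in a self-contained write-up. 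What your route buys is independence from the GLS3 citation and a slightly stronger conclusion; what the paper's route buys is a uniform argument with no case analysis at all.
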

 
 \begin{proof} Let $R_0=R\cap S$ and $H=\NB_G(R_0)$. Then $R_0$ is a Sylow $r$-subgroup of $S$ and $R_0\unlhd R$ as $S\unlhd G$. It follows that $R\leq H$ and thus $R_0\leq \OB_r(H)\leq R\leq  H$. Now $\CB_G(R)\leq \CB_G(R_0)\leq H$ and thus $\CB_G(R)=\CB_H(R).$ By \cite[Corollary 3.1.4]{GLS3}, $\FB^*(H)=\OB_r(H)$. By  \cite[31.13]{Aschbacher}, $\CB_H(\OB_r(H))\leq \OB_r(H)$ and hence $$\CB_G(R)=\CB_H(R)\leq \CB_H(\OB_r(H))\leq \OB_r(H)\leq R.$$
 The proof is now complete.
 \end{proof}

 The following lemma can be checked using the information on the orders of the Schur multipliers of finite simple groups.  We will give a proof using the Schur-Zassenhaus theorem.
 \begin{lem}\label{lem:center-quasisimple}
 Let $L$ be a finite quasisimple group. If $p$ is a prime divisor of $|\ZB(L)|$, then $p$ is also a divisor of $|L/\ZB(L)|.$ 
 \end{lem}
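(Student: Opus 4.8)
The plan is to argue by contradiction using the Schur--Zassenhaus theorem, exploiting the fact that a quasisimple group is perfect. Suppose, for contradiction, that $p$ divides $|Z(L)|$ but $p$ does not divide $|L/Z(L)|$. Write $Z=Z(L)$ and let $P$ be the Sylow $p$-subgroup of the abelian group $Z$, so that $P$ is nontrivial.

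First I would observe that $P$ is in fact a \emph{normal Hall} $p$-subgroup of $L$. Indeed, since $|L|=|Z|\cdot|L/Z|$ and $p\nmid |L/Z|$, we have $|L|_p=|Z|_p=|P|$, so $P$ is a Sylow $p$-subgroup of $L$; being contained in $Z$ it is central, hence normal in $L$, and $\gcd(|P|,|L/P|)=1$ shows it is a Hall subgroup. I would then invoke the Schur--Zassenhaus theorem to produce a complement $K$ to $P$ in $L$, i.e.\ $L=PK$ with $P\cap K=1$.

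The key step is to upgrade this semidirect decomposition to a \emph{direct} product. Because $P\leq Z(L)$, the subgroup $P$ is centralized by all of $L$, and in particular by $K$; combined with $P\cap K=1$ this gives $L=P\times K$. Finally I would derive the contradiction from perfectness: since $L$ is quasisimple it satisfies $L=L'$, whereas $L=P\times K$ with $P$ a nontrivial abelian group forces the abelianization $L/L'\cong P\times(K/K')$ to surject onto $P\neq 1$, so $L/L'\neq 1$. This contradicts $L=L'$, and completes the proof.

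There is no serious obstacle here; the argument is short once the right mechanism is identified. The only point requiring care is the verification that the central Sylow $p$-subgroup $P$ is simultaneously normal and Hall so that Schur--Zassenhaus applies, and that the centrality of $P$ converts the resulting complement into a direct factor. This is precisely where the hypothesis $p\nmid|L/Z(L)|$ is used, and where the perfectness of a quasisimple group delivers the contradiction.
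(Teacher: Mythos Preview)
Your proof is correct and follows essentially the same approach as the paper: both argue by contradiction, use that the Sylow $p$-subgroup $P$ lies in $Z(L)$ and is a normal Hall subgroup, apply Schur--Zassenhaus to obtain a complement, and then exploit perfectness of $L$ to reach a contradiction. The only cosmetic difference is in the final step: the paper computes $L=[L,L]=[H,H]\leq H$ directly (since $P$ is central, all commutators in $L=HP$ lie in $H$), forcing $L=H$ to be a $p'$-group, whereas you pass through the direct product $L=P\times K$ and look at the abelianization; these are equivalent ways of reading off the same contradiction.
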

 
 \begin{proof} Let $p$ be a prime divisor of $|\ZB(L)|$.
 Assume by contradiction that $p\nmid |L:\ZB(L)|$. Let $P$ be a Sylow $p$-subgroup of $L$. Then $P\leq \ZB(L)$ and $|L:P|$ is coprime to $|P|$. By the Schur-Zassenhaus theorem (\cite[Theorem 18.1]{Aschbacher}), $L$ has a subgroup $H$ such that $L=HP$, where $H$ is a $p'$-group. As $P\leq \ZB(L)$ while $L$ is perfect, we have $L=[L,L]=[H,H]\leq H$ whence  $L=H$ is a $p'$-group, a contradiction. Therefore, $p$ divides $|L/\ZB(L)|$ as wanted.
 \end{proof}
\section{Orders of commutators}\label{sec3}

Let $G$ be a finite group. If $C$ is a conjugacy class of $G$, we denote  its class sum by $\widehat{C}=\sum_{y\in C}y$. The set of all class sums $\widehat{C}$, where $C$ runs over the set of all conjugacy classes of $G$, forms a basis for the center $\ZB(\mathbb{C} G)$ of the complex group algebra $\mathbb{C} G$. For each $\chi\in\Irr(G)$, there exists a  $\mathbb{C}$-algebra homomorphism $\omega_\chi:\ZB(\mathbb{C} G)\rightarrow \mathbb{C}$ defined  by $$\omega_\chi(\widehat{C})=\frac{|C|\chi(g)}{\chi(1)}$$ where $g\in C$ (see \cite[Chapter 3]{Isaacs-ct}).   By \cite[Theorem 3.7]{Isaacs-ct}, $\omega_\chi(\widehat{C})$ is an algebraic integer.

For a nonempty set $\Omega$ of size $n$, we denote the alternating group and the symmetric group of degree $n$ acting on the set $\Omega$ by $\AAA_n=\Alt(\Omega)$ and $\SSS_n=\Sym(\Omega)$, respectively.
We now prove the following theorem which is equivalent to Theorem \ref{th:r-singular}.
\begin{thm}\label{th:almost-simple}
Let $G$ be a finite almost simple group with simple socle $S$. Let $p$ be a prime and let $x\in G.$ If $[x,g]=1$ or $[x,g]$ is $p$-singular for every $g\in G$, then $x=1$.
\end{thm}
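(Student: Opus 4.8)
The plan is to prove Theorem \ref{th:almost-simple} by showing its contrapositive in a strong form: for any nontrivial $x\in G$ and any prime $p$, I produce $g\in G$ with $[x,g]$ a nontrivial $p'$-element. I would organize the argument by the isomorphism type of the socle $S$, treating the alternating, sporadic, and Lie-type cases separately, since the tools differ. The overarching strategy for the bulk of the cases is a \emph{counting/character-theoretic existence argument}: to show that some element $t$ (of suitable $p'$-order) can be written as a commutator $[x,g]=x^{-1}x^g$, it suffices to show that $x^{-1}$ and the class of $x$ can be multiplied to hit $t$, i.e.\ that the relevant structure constant is nonzero. Concretely, using the class-sum homomorphisms $\omega_\chi$ recalled at the start of Section \ref{sec3}, the number of ways to write $t$ as $a b$ with $a\in (x^{-1})^G$, $b\in x^G$ is
\begin{equation*}
\frac{|x^G|^2}{|G|}\sum_{\chi\in\Irr(G)}\frac{\chi(x^{-1})\chi(x)\overline{\chi(t)}}{\chi(1)},
\end{equation*}
so it is enough to find a target $t$ of nontrivial $p'$-order for which this sum is positive. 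The main work is then to exhibit, for each $x$, a good target class $t$ and control the character sum.

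For the alternating and symmetric groups I expect a direct, hands-on argument to be cleaner than character sums. Given a nontrivial permutation $x$, I would explicitly choose a conjugate $x^g$ so that $[x,g]=x^{-1}x^g$ has a prescribed cycle type whose order is coprime to $p$; for instance, moving a single point or a small block of points produces a short cycle or product of short cycles of controlled order, and one can always arrange the support so that the resulting order avoids the prime $p$. The low-degree exceptional cases (where the "generic" choice fails, and the cases $\PSp_4(2)'\cong \AAA_6$ and similar small groups) I would simply verify computationally in GAP, exactly as the paper already does for small Lie-type groups in Lemma \ref{lem:semisimple}. The sporadic groups (and their automorphic extensions) are a finite list and are handled by direct inspection of character tables via the structure-constant formula above.

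For the Lie-type case, which is the heart of the theorem, the key idea is to exploit the existence of $p'$-elements of large class, combined with Lemma \ref{lem:semisimple} and Lemma \ref{lem:Sylow-centralizers}. The rough dichotomy is: if $x$ is not unipotent in the defining characteristic $r$, choose $p=r$ and show $[x,g]$ can be made a nontrivial semisimple (hence $r'$) element; if $x$ is $r$-singular, use the $p$-defect-zero characters of $p'$-degree coprime to $r$ supplied by Lemma \ref{lem:semisimple} to control the structure-constant sum when $p\neq r$, and use Lemma \ref{lem:Sylow-centralizers} (which forces $C_G(R)\leq R$ for a Sylow $r$-subgroup $R$) to handle the case $p=r$ by showing $x$ cannot commute modulo $r$-elements with all of $G$. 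A character $\chi$ of $p$-defect zero vanishes on $p$-singular elements, which sharply truncates the sum and makes positivity tractable for a well-chosen $p'$-target $t$.

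The hardest part will be the uniform treatment of the classical groups of small rank and small field, where the generic character estimates degrade: there the $p'$-classes available are limited, the relevant structure constants can be small, and one must either sharpen the character bound (isolating the semisimple character from Lemma \ref{lem:semisimple} and bounding the contribution of the remaining irreducibles) or fall back on explicit computation. I anticipate that the interplay between the choice of target class $t$ and the prime $p$ — ensuring simultaneously that $t$ has nontrivial $p'$-order \emph{and} that the character sum stays positive — is the delicate point, and that a careful case split on whether $p$ equals the defining characteristic $r$ is what ultimately makes the argument go through.
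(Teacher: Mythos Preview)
Your ingredients are the right ones---structure constants, $p$-defect-zero characters, Lemma \ref{lem:semisimple}, Lemma \ref{lem:Sylow-centralizers}---and the alternating/sporadic plan matches the paper's. But for the Lie-type case you are setting up the hard version of the argument, and the paper does something cleaner that sidesteps exactly the difficulty you flag as ``the hardest part''.

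First, your dichotomy is garbled: the prime $p$ is \emph{given}, so ``if $x$ is not unipotent, choose $p=r$'' is not an option. The correct split (and the paper's) is simply $p=r$ versus $p\neq r$.

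Second, and more importantly, you plan to pick a specific $p'$-target $t$ and prove that the single structure constant $\sum_\chi |\chi(x)|^2\overline{\chi(t)}/\chi(1)$ is positive. This forces you into the character-bound estimates you are already worried about in low rank. The paper avoids this entirely: assume for contradiction that \emph{every} nontrivial class $C_i$ in the decomposition $\widehat{K^{-1}}\widehat{K}=|K|+\sum_i n_i\widehat{C_i}$ is $p$-singular, choose a $p$-defect-zero $\chi\in\Irr(G)$ (Steinberg if $p=r$; the semisimple character from Lemma \ref{lem:semisimple} if $p\neq r$, shown to extend to $G$), and apply $\omega_\chi$ to the whole equation. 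Since $\chi$ vanishes on every $C_i$, the sum collapses to the exact identity
\[
|\chi(x)|^2=\frac{\chi(1)^2}{|K|}.
\]
Integrality of $|\chi(x)|^2$ then forces $|K|\mid\chi(1)^2$. For $p=r$ this makes $|K|$ a $p$-power, and Kazarin (Lemma \ref{lem:pa}) gives a contradiction. For $p\neq r$, since $r\nmid\chi(1)$ you get $r\nmid|K|$, so $x$ centralises a Sylow $r$-subgroup and by Lemma \ref{lem:Sylow-centralizers} is itself an $r$-element; the $Z_r^*$-theorem then yields a nontrivial $r$-element commutator, contradicting $p\neq r$. No target class, no positivity estimate, no small-rank pain.

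So your plan would probably work in principle, but it attacks the problem from the harder side; the key idea you are missing is to apply $\omega_\chi$ to the full class-sum decomposition rather than to a single structure constant.
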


\begin{proof}
Let the pair $(G,x)$ be a counterexample to the theorem with $|G|$ minimal. Then $1\neq x\in G$ and $[x,g]=1$ or  $[x,g]$ is $p$-singular for every $g\in G.$ Suppose that $H$ is a finite almost simple proper subgroup of $G$ containing $x$. Clearly, $[x,h]=1$ or it is $p$-singular for every $h\in H.$  By the minimality of $|G|$, $x=1$, which is a contradiction. Thus $x$ does not lie in any almost simple proper subgroup of $G$. In particular,  $G=\la S,x\ra=S\la x\ra$ and hence $G'=S$. Let $K=x^G.$ Then 
\begin{equation}\label{eqn1} K^{-1}K=C_0\cup C_1\cup C_2\cup\dots\cup C_a
\end{equation} where $C_0=1$ and for $1\leq i\leq a,$  $C_i$ is a conjugacy class of $G$ containing $y_i$ with $y_i\in G$ a nontrivial $p$-singular element. Taking the class sums, we get 
\begin{equation}\label{eqn2}\widehat{K^{-1}}\widehat{K}=\sum_{i=0}^a n_i \widehat{C_i}\end{equation}
 where the structure constant $n_i=n(K^{-1},K,C_i)$ can be computed by \begin{equation}\label{eqn3}n(K^{-1},K,C_i)=\dfrac{|K|^2}{|G|}\sum_{\phi\in\Irr(G)}\dfrac{|\phi(x)|^2\overline{\phi(y_i)}}{\phi(1)}\end{equation} (see Exercise (3.9) in \cite{Isaacs-ct}). For $i=0,$ we have $n_0=|K|.$
Let $\chi\in\Irr(G)$. Applying the homomorphism $\omega_\chi$ to the class sum Equation \eqref{eqn2}, we get

\begin{equation}\label{eqn4}
\dfrac{|K|^2|\chi(x)|^2}{\chi(1)^2}=|K|+\sum_{i=1}^a n_i\dfrac{|C_i|\chi(y_i)}{\chi(1)}.
\end{equation}

(a) Assume first that $S$ is a sporadic simple group, the Tits group or an alternating group $\AAA_n$ with $5\leq n\leq 19$. For these cases, the character tables of the almost simple groups $G$ with socle $S$ are available in GAP \cite{GAP}. For each $1\neq x\in G,$ we can compute the structure constant $n(K^{-1},K,C)$ for the conjugacy classes $C=y^G$ for every $y\in G$, where $K=x^G,$ and one can check that there exist two nontrivial elements $z_1,z_2$ of coprime orders with nonzero structure constants. Hence $z_1^G$ and $z_2^G$ appear in the decomposition of the product $K^{-1}K$, which is a contradiction.

(b) Assume next that $S\cong \AAA_n$ with $n\ge 20.$ For these cases, $G\in \{\AAA_n,\SSS_n\}.$ Let $\Omega=\{1,2,\ldots, n\}$. If $\sigma=(i_1,i_2,\dots,i_s)$ is an $s$-cycle, then we denote by $\ell(\sigma)$ the length of $\sigma$, which is $s$, and the support of $\sigma$ is the subset $\supp(\sigma)=\{i_1,i_2,\ldots,i_s\}$ of $\Omega.$ Let $x=\sigma_1\sigma_2\dots\sigma_k$, where $\sigma_1, \dots,\sigma_k$ are pairwise disjoint  cycles in $\SSS_n$, including cycles of length $1$. Denote $\ell_i=\ell(\sigma_i)$ for the length of each cycle, and assume without loss of generality that  $\ell_1\geq \ell_2\geq\dots\geq \ell_k.$ Note that $\sum_{i=1}^k \ell_i=n.$

Assume $x$ has a fixed point on $\Omega$. Then $\ell_k=1$. Without loss of generality, assume $\sigma_k=(n)$. Then $x=\sigma_1\sigma_2\dots\sigma_{k-1}\in G\cap \Sym(\Omega_1)$ with $\Omega_1=\Omega\setminus \{n\}$. Now $G\cap \Sym(\Omega_1)$ is a finite almost simple group with socle $\Alt(\Omega_1)\cong \AAA_{n-1}$ containing $x$, which is impossible.  Thus  $x$ has no fixed point and hence $\ell_k\ge 2.$

Assume  $G=\SSS_n$. Suppose that $k\ge 2.$ Then $\ell_k\leq n/2$. Let $x=uv$, where $v=\sigma_k$ and $u=\sigma_1\sigma_2\dots \sigma_{k-1}$. Then $1\neq u\in\Sym(\Omega_2)$ and $v\in\Sym(\Omega_1)$ with $\Omega_1=\supp(v)$ and $\Omega_2=\Omega\setminus \Omega_1.$  Here $n>m:=|\Omega_2|=n-\ell_k\ge n/2\ge 10$. Let $g\in\Sym(\Omega_2)\leq G$, we see that $[g,v]=1$ and thus $[x,g]=[uv,g]=[u,g]$ is either $1$ or $p$-singular. Thus $(\Sym(\Omega_2),u)$ satisfies the hypothesis of the theorem. By the minimality of $|G|$, we have $u=1$,  a contradiction. Suppose that $k=1$ so $x$ is an $n$-cycle. Without loss of generality, assume $x=(1,2,\dots,n)$. Let $g_1=(1,3,2)$ and $g_2=(1,2)(3,4)$. Then both $g_1$ and $g_2$ lie in $\AAA_n\unlhd G$. By computation, we get $[x,g_1]=(1,3,n)$ and $[x,g_2]=(1,3,4,2,n)$, violating the hypothesis of the theorem.

Assume $G=\AAA_n.$ Observe that if $k=1$, then $x\in G$ is an $n$-cycle and $n$ must be odd. The argument above also shows that this is impossible. Thus, we assume $k\ge 2.$  Then $2\leq \ell_k\leq n/2$. Let $\Omega_1$ be the support of $\sigma_k$ and $\Omega_2=\Omega\setminus\Omega_1$. Then $m=|\Omega_2|=n-\ell_k\ge  n/2 \ge 10.$ Let $H=\Alt(\Omega_2)\cong \AAA_m$. Then $H$ is a finite nonabelian simple subgroup of $G$ and $H$ centralizes $v=\sigma_k$. Note that $x=uv=vu\in G.$

Assume $\ell_k$ is odd. Then $v$ is an even permutation. So $v\in\Alt(\Omega_1)$ and $u\in \Alt(\Omega_2)\leq G$. Let $g\in  H$. Then $[g,v]=1$ and thus $[x,g]=[u,g]$ is either 1 or $p$-singular and thus $u=1$ as $|H|<|G|$, a contradiction.

 Assume $\ell_k$ is even. Then $v$ is an odd permutation and so $u\in\Sym(\Omega_2)$ is odd as $x=uv$ is an even permutation.  Let $g\in\Sym(\Omega_2)$. If $g$ is an even permutation, then $g\in\Alt(\Omega_2)\leq G$; and if $g$ is an odd permutation, then $gv\in G$. As $[g,v]=1$, if $g$ is even, then $[u,g]=[x,g]$; and if $g$ is odd, then $[x,gv]=[uv,gv]=[u,g]$. In either cases, $[u,g]$ is either $1$ or it is $p$-singular for all $g\in \Sym(\Omega_2)$. Hence the pair $(\Sym(\Omega_2),u)$ satisfies the hypothesis of the theorem with $|\Sym(\Omega_2)|<|G|$. Thus $u=1$, which is a contradiction.

(c) Assume that $S\not\in\{\PSp_4(2)', {}^2\textrm{F}_4(2)'\}$ is a finite simple group of Lie type in characteristic $r$, where $r$ is a prime.  Note that the cases $S\cong \PSp_4(2)'\cong \AAA_6$  and $S\cong {}^2\textrm{F}_4(2)'$ have been handled earlier. 

Assume first that $p=r$. Let $\theta\in\Irr(S)$ be the Steinberg character of $S$ of degree $|S|_p$. It is well-known that (see \cite{F1}) $\theta$ extends to $\chi\in\Irr(G)$ and that $\theta(y)=0$ whenever $y\in S$ is $p$-singular since $\theta$ has $p$-defect zero (\cite[Theorem 8.17]{Isaacs-ct}). For each $i$ with $1\leq i\leq a$, we know that $y_i=[x,g_i]$ for some $g_i\in G$. As $G/S$ is abelian,  $y_i\in G'=S$ for all $i\ge 1$. Moreover, $y_i$ is $p$-singular by the hypothesis. Therefore $\chi(y_i)=\theta(y_i)=0$ for all $1\leq i\leq a$. From Equation \eqref{eqn4}, we obtain 
\begin{equation}\label{eqn5}
|\chi(x)|^2=\frac{\chi(1)^2}{|K|}=\frac{|S|_p^2}{|K|}.
\end{equation}
By \cite[Corollary 3.6]{Isaacs-ct}, $\chi(x)$ is an algebraic integer and so is $|\chi(x)|^2=\chi(x)\chi(x^{-1})$. Now the previous equation implies that $|\chi(x)|^2$ is also a rational integer and so it is an integer by \cite[Lemma 3.2]{Isaacs-ct}. It follows that $|K|$ divides $|S|_p^2$. In particular, $|K|$ is a power of $p$. By Lemma \ref{lem:pa}, $\la K\ra=\la x^G\ra$ is solvable. However, since $G$ is almost simple, $G$ has no nontrivial solvable normal subgroup. Thus $\la x^G\ra=1$ forcing $x=1$, a contradiction.

Assume that $p\neq r.$ By Lemma \ref{lem:semisimple}, $S$ has an irreducible character $\theta$ of $p$-defect zero and $r\nmid \theta(1)$.  Let $\chi\in\Irr(G)$ be an irreducible character of $G$ lying above $\theta$. By Clifford's theorem (\cite[Theorem 6.2]{Isaacs-ct}), $\chi_S=e(\theta_1+\dots+\theta_t)$, where each $\theta_i$ is conjugate to $\theta=\theta_1$ and $t=|G:I_G(\theta)|$, here $I_G(\theta)$ is the inertia group of $\theta$ in $G$.  Since $\theta\in\Irr(S)$ has $p$-defect zero, each $\theta_i$ also has $p$-defect zero and thus $\theta_i(y)=0$ for every $p$-singular element $y\in S.$ Arguing as in the previous case, we know that $y_j\in S$ and it is $p$-singular  for all $j$ with $1\leq j\leq a$. Thus $\theta_i(y_j)=0$ for every $i$ and $j$ with $1\leq i\leq t$ and $1\leq j\leq a.$ It follows that $\chi(y_j)=e\sum_{i=1}^t\theta_i(y_j)=0$ for all $j$ with $1\leq j\leq a$. Equation \eqref{eqn4} yields
\begin{equation}\label{eqn6}
|\chi(x)|^2=\frac{\chi(1)^2}{|K|}.
\end{equation}
We claim that $\chi$ is an extension of $\theta$.  If $t>1$, then $\chi=\varphi^G$ for some $\varphi\in\Irr(I_G(\theta))$ lying over $\theta$. In this case, $S\unlhd I_G(\theta)\unlhd G$ and $x\not\in I_G(\theta)$ which implies that $\chi(x)=\varphi^G(x)=0$. However, this is impossible by Equation \eqref{eqn6}. Therefore $t=1$ and so $\theta$ is $G$-invariant. Since $G/S$ is cyclic, by \cite[Corollary 11.22]{Isaacs-ct} $\theta$ extends to $G$ and thus $\chi$ is an extension of $\theta$. Hence $\chi(1)=\theta(1)$ is not divisible by $r$. Recall that $r$ is the characteristic of $S$.

Arguing as in the previous case, $|\chi(x)|^2$ is both an algebraic integer and a rational integer, it is an integer and thus $|K|$ divides $\chi(1)^2$. Hence $r$ does not divide $|K|=|G:\CB_G(x)|$. Thus $\CB_G(x)$ contains a Sylow $r$-subgroup $R$ of $G$. In other words, $x\in \CB_G(R)$. By Lemma \ref{lem:Sylow-centralizers}, $x\in R$. In particular, $x$ is a nontrivial $r$-element.    
 
 Suppose that $x^G\cap \CB_G(x)=\{x\}$. Since $x$ is an $r$-element, by Glauberman $\ZB_p^*$-theorem (\cite[Theorem 5.1]{GTT}), $x\in \ZB_r^*(G)$. However, as $G$ is an almost simple group with simple socle $S$, a finite simple group of Lie type in characteristic $r$, $\OB_{r'}(G)=1=\ZB(G)$. Hence $x\in \ZB_r^*(G)=1$, a contradiction.  Thus we can find $g\in G$ such that $x^g\neq x$ and $[x,x^g]=1$. Hence $[x,g]=x^{-1}x^g$ is a nontrivial $r$-element. On the other hand, $[x,g]$ is $p$-singular. Thus $p=r$, contradicting our assumption that $p\neq r.$  The proof is now complete.
\end{proof}

 We are ready to prove Theorem \ref{th:r-elements-commutators}. 
 
 \begin{proof}[\textbf{Proof of Theorem \ref{th:r-elements-commutators}}]
 Suppose first that $x$ is central modulo $\OB_p(G)$. Then for every $g\in G$, we have $[x,g]\in \OB_p(G)$, and the result follows. 
 We now prove the converse. Assume that $[x,g]$ is a $p$-element for every $g\in G$. We will show, by induction on $|G|$, that $x$ is central modulo $\OB_p(G)$; that is, $[x,g]\in \OB_p(G)$ for all $g\in G.$

 Let $\overline{G}=G/\OB_p(G)$ and adopt the `bar' notation. Note that $\OB_p(\overline{G})=1$. For every $\overline{g}\in\overline{G}$, we have $[\overline{x},\overline{g}]=[x,g]\OB_p(G)$, which  is a $p$-element in $\overline{G}$. Suppose $\OB_p(G)>1$. Then $|\overline{G}|=|G/\OB_p(G)|<|G|$, so by the induction hypothesis, $\overline{x}\in \ZB(\overline{G})$. Hence, ${x}$ is central in ${G}$ modulo $\OB_p(G)$ as desired.  Now assume $\OB_p(G)=1$. Our goal is to show that $x\in \ZB(G)$.
 
Assume $M=\la x^G\ra<G$. Then $M\unlhd G$ and since $\OB_p(G)=1$, we have $\OB_p(M)\leq \OB_p(G)=1$, so $\OB_p(M)=1$. For every $g\in M$, the commutator  $[x,g]$ is a $p$-element by hypothesis. Thus, by the induction hypothesis,  $x\in \ZB(M)$. Since $\ZB(M)$ is a normal $p'$-subgroup of $G$, we conclude that $x\in \OB_{p'}(M)\leq \OB_{p'}(G)$. Therefore, for all $g\in G$,  $[x,g]\in \OB_{p'}(G)$. But since $[x,g]$ is a $p$-element by assumption, it must be trivial. Thus $[x,g]=1$ for all $g\in G$, and hence $x\in \ZB(G)$.

 Thus we may assume that $\la x^G\ra=G$. Let $N$ be a minimal normal subgroup of $G$.  We consider the following cases.
 
 \smallskip
 \textbf{Case $1$}. Assume $N$ is abelian. Then $N$ is an elementary abelian $r$-group for some prime $r\neq p$. Since $[x,n]\in N$ is an $r$-element for all $n\in N$, it follows that $[x,N]=1$, so $x\in \CB_G(N).$ As $\CB_G(N)\unlhd G$ and $\la x^G\ra=G$, $\CB_G(N)=G$, so $N\leq \ZB(G).$ Now $xN\in G/N$ satisfies the condition that $[xN,gN]$ is a $p$-element for all $gN\in G/N.$ We claim that $\OB_p(G/N)=1$. Once this is established, we may apply the induction hypothesis to conclude that $xN\in \ZB(G/N)$, which implies $[x,g]\in N$ for all $g\in G$. Since $[x,g]$ is a $p$-element for every $g\in G$ by hypothesis, it follows that $[x,g]=1$ for all $g\in G$, so  $x\in \ZB(G)$ as wanted.  To prove the claim: let $U/N=\OB_p(G/N)$. Since $N\leq \ZB(G)$, $N$ is a central Sylow $r$-subgroup of $U$. It follows that $U$ has a normal Sylow $p$-subgroup $P$. Since $U\unlhd G$, we have $P\unlhd G$  as well. As $\OB_p(G)=1$, $P=1$. Hence $\OB_p(G/N)=1$, completing the proof of the claim.

  Thus, we may henceforth assume that $\RB(G)=1$.

\smallskip
 \textbf{Case $2$}.  Assume $N$ is nonabelian. Then $N\cong S^k$, where $S$ is a nonabelian simple group and $k\ge 1$. 
Suppose that $N$ is a $p'$-group. Then $[x,N]=1$, so  $N\leq \ZB(G)$, a contradiction.  Hence $p$ divides $|S|$. In particular, $\OB_{p'}(G)=\RB(G)=1.$ Let $H=\la x\ra N$, and suppose $H<G$. By induction, $x$ is central in $H$ modulo $\OB_p(H)$, so $\la x\ra \OB_p(H)\unlhd H$, and hence $x\in \RB(H)$. Since $\RB(H)\cap N=1$, we have $[\RB(H),N]=1$,  and so $x\in \CB_G(N)\unlhd G$, contradicting the fact that $\la x^G\ra=G$ and $N$ is nonabelian. Thus $G=\la x\ra N.$ 

Let $C=\CB_G(N). $ Then $C\unlhd G.$ Assume $C>1$. Then  $C\cap N=1$ and $[C,N]=1$. As $G=\la x\ra N$,  $C\cong CN/N\leq G/N$ and so $C$ is a cyclic normal subgroup $G$. In particular, $C\leq \RB(G)=1$, a contradiction. Therefore, $N$ is the unique minimal normal subgroup of $G$. Write $N=S_1\times S_2\times \cdots\times S_k$, where  $S_i\cong S$ for $1\leq i\leq k$. Note that $\la x\ra$ acts transitively on the set $\{S_i\}_{i=1}^k$.
Let $L=S_1$. Assume that $k\ge 2.$ Then $L\neq L^x$ and thus $[L,L^x]=1$ by \cite[31.5]{Aschbacher}. Let $s\neq p$ be a prime divisor of $|L|$ and let $a\in L$ be an $s$-element. Then $a^x\in L^x$ commutes with $a$. It follows that $a^xa^{-1}=[x,a^{-1}]$ is both an $s$-element and a $p$-element. Hence it is trivial, so $x$ commutes with $a$ and thus $x$ commutes with all $s$-elements of $L$. Since $L$ is nonabelian simple, $L$ is generated by a nontrivial conjugacy class of $s$-elements, hence $x$ centralizes $L$, which is a contradiction. Therefore, $x$ normalizes $L$ which forces $k=1$. Therefore, $G$ is a finite almost simple group with simple socle $S$. 
By Theorem \ref{th:almost-simple}, we conclude that $x=1\in \ZB(G)$, completing the proof of the theorem.
 \end{proof}

\section{Products of conjugacy classes}\label{sec4}
Let $G$ be a finite group. We denote the last term of the derived series of $G$ by $G^{(\infty)}$.
We now prove Theorems \ref{th:3-classes} and \ref{th:two-primes-divisors}.

\begin{proof}[\textbf{Proof of Theorem \ref{th:3-classes}}]
Let $G$ be a counterexample to the theorem with $|G|$ minimal.  Then there exists a conjugacy class $K$ of $G$ such that $K^{-1}K=1\cup D\cup D^{-1}$ for some conjugacy class $D$ of $G$, but $H=\la K\ra$ is not solvable. Let $x\in K$ and $y\in D$. Note that  $D=y^G$ and $D^{-1}=(y^{-1})^G.$

Let $N$ be a nontrivial normal subgroup of $G$. The conjugacy class $(xN)^{G/N}$ in $G/N$ also satisfies the hypothesis of the theorem. By the minimality of $|G|$, we conclude that $HN/N\cong H/(H\cap N)$ is solvable. In particular, if $N$ is solvable, then so is $H.$ Therefore, we may assume $\RB(G)=1$.

Suppose $G$ has two distinct minimal normal subgroups $N_1$ and $N_2$. Then $N_1\cap N_2=1$ and the previous argument implies that  $H^{(\infty)}\subseteq N_i$ for each $i=1,2$. Hence $H^{(\infty)}\subseteq N_1\cap N_2=1$, so $H$ is solvable, a contradiction. Thus, $G$ has a unique minimal normal subgroup $N\cong S^k$, where $S$ is a finite nonabelian simple group and $k\ge 1$. It follows that $\CB_G(N)=1$, and so $\FB^*(G)=\EB(G)=N$. 

Recall that $K^{-1}K=1\cup D\cup D^{-1}$ and let $m=o(y)$. For every $g\in G$, $[x,g]=x^{-1}x^g\in K^{-1}K$, so either  $[x,g]=1$ or $[x,g]$ is conjugate in $G$ to $y$ or $y^{-1}$. In particular,  $[x,g]=1$ or $o([x,g])=m$ for all $g\in G.$  If $m$ is a power of a prime $p$, then Theorem \ref{th:r-elements-commutators} yields that $\la x\ra \OB_p(G)\unlhd G$, and hence $H$ is solvable, a contradiction. So we assume  that $m>1$ is not a prime power.

Let $L$ be a component of $G$. Then $L\leq N$ and $L\cong S.$  Assume that $L^x\neq L$. By \cite[31.4]{Aschbacher}, $[L^x,L]=1$.  Let  $p$ be a prime divisor of $|L|$ and let $a\in L$ be  a $p$-element. Then $[a,a^x]=1$ and thus $a^xa^{-1}=[x,a^{-1}]$ is a $p$-element. Since $m$ is not a power of $p$, we must have $[x,a^{-1}]=1$. In particular, $x$ commutes with all $p$-elements in $L$. Since $L$ is generated by its $p$-elements, $x$ centralizes $L$ which implies that $L^x=L$, a contradiction. Hence $L^x=L$ for every component $L$ of $G$. If $x$ centralizes all components of $G$, then $x\in \CB_G(\FB^*(G))=\CB_G(N)=1$, a contradiction. Therefore, $x$ normalizes but does not centralize a component $L$ of $G$.

Let $M=\la x\ra L$ and let $A=\CB_M(L)\unlhd M$.  Then  $L\cap A=1$ and $\overline{M}=M/A$ is  almost simple  with socle $\overline{L}=LA/A\cong L$. Let $g\in M$ be such that $\overline{g}\in \overline{M}\setminus \CB_{\overline{M}}(\overline{x})$. Then  $z=[x,g]=x^{-1}x^g\neq 1$. Since $1\neq z\in K^{-1}K=1\cup D\cup D^{-1}$, we have $z\in D\cup D^{-1}$. Thus $o(z)=m>1$ and  $t=o(\overline{z})$ dividing $m$. Since $M=\la x\ra L$, $M'=L$ whence $z\in L$. On the other hand, $z^t\in A$. So $z^t\in L\cap A=1$, which implies $t=m$. We have shown that for all $\overline{g}\in\overline{M}$, either $[\overline{x},\overline{g}]=\overline{1}$  or $[\overline{x},\overline{g}]$ has order $m$. Since $m>1$ is divisible by some prime, Theorem \ref{th:almost-simple} implies $\overline{x}=\overline{1}$. Therefore, $x\in A$ centralizes $L$, contradicting our earlier conclusion. This contradiction completes the proof.
\end{proof}

\begin{proof}[\textbf{Proof of Theorem \ref{th:two-primes-divisors}}]

Let $G$ be a counterexample to the theorem with $|G|$ minimal. Then $x$ is a $p$-element for some prime $p$ and there exist two distinct primes $r$ and $s$ such that  for every $g\in G$, either $[x,g]=1$ or $rs$ divides $o([x,g])$; yet $x\not\in \ZB(G)$. 
By the minimality of $|G|$, if $H$ is a proper subgroup of $G$ containing $x$, then $x\in \ZB(H)$.

Let $M=\la x^G\ra$, and suppose $M<G.$ Then by the minimality of $|G|$,  we have $x\in \ZB(M)\subseteq \FB(G)$. Since $x$ is a $p$-element, $x\in \OB_p(G)$. However, since $x\not\in \ZB(G)$, there exists  some $g\in G\setminus \CB_G(x)$, so  $1\neq [x,g]\in \OB_p(G)$. This contradicts the assumption that  $rs$ divides the order of every nontrivial commutator $ [x,g]$ for all $g\in G\setminus \CB_G(x)$. Hence, we must have $G=\la x^G\ra.$
 
 Assume that $G$ is nonsolvable. Then $x\in \ZB(H)$ for every solvable subgroup  $H$ of $G$ containing $x$. By Lemma \ref{lem:central}, $x\in \ZB(G)$, contradicting our assumption. Therefore, $G$ is solvable.

Now consider the normal subgroups $\OB_r(G)$ and $\OB_{r'}(G)$. Suppose $g\in \OB_r(G)\setminus \CB_{G}(x)$. Then $[x,g]\neq 1$ is an $r$-element, hence $rs\nmid o([x,g])$, a contradiction. The same argument applies to $\OB_{r'}(G)$. Therefore,  $x$ centralizes both $\OB_r(G)$ and $\OB_{r'}(G)$. Since $x^G$ generates $G$, both $\OB_r(G)$ and $\OB_{r'}(G)$ are central in $G$. In particular, $\FB(G)\leq \ZB(G)$. By Fitting's lemma (\cite[31.10]{Aschbacher}), $\CB_G(\FB(G))\leq  \FB(G)$, so it follows that $G=\FB(G)$. As $x$ is a $p$-element and $G=\la x^G\ra$, it follows that $G$ is a $p$-group. Now apply the same reasoning as in the second paragraph: if $G$ is a $p$-group and $x\in G$ is not central, then there exists $g\in G\setminus \CB_G(x)$ such that $1\neq [x,g]\in G$ is a $p$-element. But this contradicts the assumption that $rs\mid o([x,g])$. Thus, we conclude that $x\in \ZB(G),$ completing  the proof.
\end{proof}

\subsection*{Acknowledgment} The author thanks Gunter Malle for a careful reading and comments on an earlier version of the paper. He is also grateful to Bob Guralnick and Mandi Schaeffer Fry for discussions during the preparation of this work.

\end{document}